\def\Z{{\Bbb Z}}
\newtheorem{theorem}{Theorem}[section]
\newtheorem{corollary}[theorem]{Corollary}
\newtheorem{lemma}[theorem]{Lemma}
\theoremstyle{definition}
\newtheorem{example}[theorem]{Example}
\theoremstyle{remark}
\begin{document}
\title{Embedding periodic maps on surfaces \\into those on $S^3$}

\author{Yu Guo, Chao Wang, Shicheng Wang, Yimu Zhang}

\date{}
\maketitle

\centerline{School of Mathematical Science} \centerline{Peking
University, Beijing, 100871 CHINA}


\begin{abstract} Call a periodic map $h$ on the closed
orientable surface $\Sigma_g$ extendable if $h$ extends to a
periodic map over the pair $(S^3, \Sigma_g)$ for possible embeddings
$e: \Sigma_g\to S^3$.

We determine the extendabilities for all periodical maps on
$\Sigma_2$. The results involve various orientation
preserving/reversing behalves  of the periodical maps on the pair
$(S^3, \Sigma_g)$. To do this we first list  all periodic maps on
$\Sigma_2$, and indeed we exhibit each of them  as a composition of
primary and explicit  symmetries, like rotations, reflections and
antipodal maps, which itself should be an interesting piece.

A  by-product is that for each even $g$, the maximum order periodic
map on $\Sigma_g$ is extendable, which contrasts sharply to the
situation in orientation preserving category.  \footnote{
MSC: 57M60,
57S17, 57S25

Keywords:
 symmetry of surface, symmetry of 3-sphere, extendable action.

 email: wangsc@math.pku.edu.cn}

\end{abstract}


\section{Introduction}
Closed orientable surfaces are most ordinary geometric and physical
subjects to us, since they stay in our 3-dimensional space
everywhere in various manners (often as the boundaries of
3-dimensional solids).

The study of symmetries on closed orientable surfaces is also a
classical topic in mathematics. An interesting fact is that some of
those symmetries are easy to see, and others are not, or more
precisely to say, some symmetries are more visible than others.

Let's have a look of examples. Let $\Sigma_g$ be the orientable
closed surface with genus $g$. We always assume that $g>1$ in this
note. It is easy to see that there is an order 2 symmetry $\rho$ on
$\Sigma_2$ indicated in the left side of Figure 1, and it is not
easy to see that there is a symmetry $\tau$ of order 2 on $\Sigma_2$
whose fixed point set consists of two non-separating circles,
indicated as the right side of Figure 1. A primary reason for this
fact
 is that we can embed $\Sigma_2$ and $\rho$ into the 3-space and
its symmetry space simultaneously, simply to say $\rho$ is induced
from a symmetry of our 3-space,  or $\rho$ extends to a symmetry
over the 3-space; and on the other hand, as we will see that $\tau$
can never be induced by a symmetry of the 3-space for any embedding
of $\Sigma_2.$

\begin{center}
\scalebox{0.7}{\includegraphics{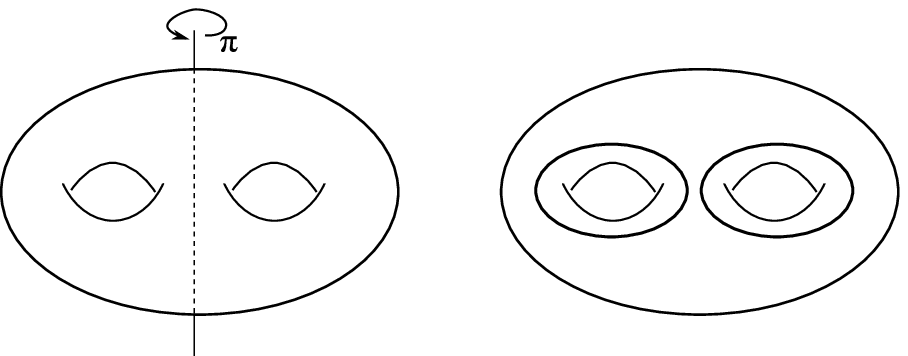}}

Figure 1
\end{center}

Now we make a precise definition: If a finite group action $G$ on
$\Sigma_g$ can also acts on the pair $(S^3, \Sigma_g)$ for possible
embeddings $e: \Sigma_g\to S^3$, which is to say, $\forall h \in G$
we have $h\circ e=e\circ h$, we call such a group action on
$\Sigma_g$ extendable over $S^3$ (with respect to $e$).

Such extendable finite group actions have been addressed in
\cite{WWZZ1} and \cite{WWZZ2}. But those two papers focus on the
maximum orders problem in the orientation preserving category. In
this note we start to discuss the extendable problem for general
finite group actions and  we will delete the orientation preserving
restriction. We will first focus on the simplest case: the cyclic
group actions on the surface $\Sigma_2$.

We examine the extendabilities for all periodical maps on
$\Sigma_2$. To do this we first need not only to exhibit all
 $\Z_n$-actions on $\Sigma_2$, but also exhibit them in a very geometric and
 visible  way.

 \begin{theorem}\label{classifacation and extendibility}
(1) There are twenty one conjugacy  classes of finite cyclic actions
on $\Sigma_2$ which are generated by the following periodical maps:
$\rho_{2,1}$, $\rho_{2,2}$, $\tau_{2,1}$, $\tau_{2,2}$,
$\tau_{2,3}$, $\tau_{2,4}$, $\tau_{2,5}$, $\rho_3$, $\rho_4$,
$\tau_{4,1}$, $\tau_{4,2}$, $\rho_5$, $\rho_{6,1}$, $\rho_{6,2}$,
$\tau_{6,1}$, $\tau_{6,2}$, $\tau_{6,3}$, $\rho_8$, $\tau_8$,
$\rho_{10}$, $\tau_{12}$; where each map presented by $\rho$/$\tau$
is orientation preserving/reversing, and the first subscribe
indicates the order.


(2) The extendability of twenty one periodic maps  in (1) are:
$\rho_{2,1}\{+\}$, $\rho_{2,2}\{+,-\}$, $\tau_{2,1}\{+,-\}$,
$\tau_{2,2}\{+,-\}$, $\tau_{2,3}\{+\}$, $\tau_{2,4}\{\varnothing\}$,
$\tau_{2,5}\{-\}$, $\rho_3\{+\}$, $\rho_4\{-\}$,
$\tau_{4,1}\{+,-\}$, $\tau_{4,2}\{\varnothing\}$,
$\rho_5\{\varnothing\}$, $\rho_{6,1}\{+\}$, $\rho_{6,2}\{-\}$,
$\tau_{6,1}\{\varnothing\}$, $\tau_{6,2}\{-\}$,
$\tau_{6,3}\{\varnothing\}$, $\rho_8\{\varnothing\}$,
$\tau_8\{\varnothing\}$, $\rho_{10}\{\varnothing\}$,
$\tau_{12}\{+\}$; where the symbols $\{+\}$/ $\{-\}$/ $\{+,-\}$/
$\{\varnothing\}$ indicates the map has orientation
preserving/orientation reversing/both orientation preserving and
reversing/no extension.

A geometric description of (1) and (2) are given in Figure 2 and 3,
where we exhibit each of them as a composition of rotations,
reflections, and (semi-)antipodal maps:

(i) each rotation in 2-space (3-space) is indicated by a circular
arc with arrow around a point (an axis);

(ii) each reflection about a $2\text{-sphere} = 2\text{-plane} \cup
\infty$ or a $circle=line\cup \infty$ is indicated by arc with two
arrows,

(iii) each (semi-)antipodal map is indicated by a point (another
fixed point is $\infty$).

More concrete description of those maps will be given in the proof
of Theorem \ref{classifacation and extendibility} and Example
\ref{cage}.
\end{theorem}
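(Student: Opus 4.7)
The argument falls into three stages: (a) enumerate the twenty one conjugacy classes of finite cyclic actions on $\Sigma_2$; (b) for each map claimed to be extendable with a given orientation behaviour, exhibit an explicit embedding $e:\Sigma_2\hookrightarrow S^3$ and a symmetry of $S^3$ restricting to (something conjugate to) the given map; (c) for each non-extendable case, produce an obstruction ruling out every possible embedding.

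For (a) I would combine the Riemann--Hurwitz formula with the NEC-group framework. For an orientation-preserving $h$ of order $n$ on $\Sigma_2$, the orbifold quotient $\Sigma_2/\langle h\rangle$ has signature $(g_0;m_1,\dots,m_k)$ satisfying
\[
2 \;=\; n\Bigl(2g_0-2+\sum_{i=1}^{k}\bigl(1-\tfrac{1}{m_i}\bigr)\Bigr),
\]
and an elementary enumeration restricts $n\in\{2,3,4,5,6,8,10\}$ and leaves a short list of signatures. For orientation-reversing $h$, one works with an NEC group acting on the hyperbolic plane (allowing reflections and a non-orientable or bordered quotient), and orders up to $12$ arise. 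In each case Nielsen realization together with uniqueness of surface kernels modulo the mapping class group collapses the algebra to one, or at most a few, genuine conjugacy classes, matching the list in (1).

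For (b), the geometric models advertised in Figures 2 and 3 carry the proof: each extendable map is displayed as the restriction, to a standard embedded $\Sigma_2$ (a tube around an equivariant graph, the boundary of a regular neighbourhood of a polyhedron, or an obviously invariant surface), of a concrete isometry of the round $S^3$, written as a composition of rotations, reflections, and (semi-)antipodal maps. When both a $\{+\}$ and a $\{-\}$ extension is claimed, two separate embeddings have to be produced; when only one sign is claimed, a single embedding suffices (together with the obstruction argument below for the other sign).

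The hardest stage is (c). The core tool is that every finite cyclic action on $S^3$ is, by the resolution of the Smith conjecture and its equivariant sharpenings, conjugate to an orthogonal one; hence the fixed-point set of a nontrivial element is either empty, two points, an unknotted circle, or a standard $2$-sphere, and the action on $S^3$ is classified up to conjugacy by order and orientation type. For a candidate extension of $h$ I would read off from the orbifold data the number and type of fixed points, the fixed circles (separating versus non-separating in $\Sigma_2$), and the permutation induced on the two complementary handlebodies of $e(\Sigma_2)$. In each non-extendable case this geometric data fails to fit inside the fixed set of any orthogonal $H$ of the required order and orientation type, or else the complementary handlebodies cannot be permuted in a way compatible with the action on their common boundary. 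Organising these obstructions uniformly across all twenty one maps, and in particular separating $\{+\}$ from $\{-\}$ from $\{+,-\}$ in the mixed cases, is where the bulk of the work sits; my plan is to proceed in order of increasing $n$, recycling the fixed-point analysis of $h^k$ for divisors $k\mid n$ whenever possible, and to dedicate an example (the promised Example \ref{cage}) to the exceptional order $12$ map $\tau_{12}$ where the extendable construction is least evident.
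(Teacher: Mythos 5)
Your stages (a) and (b) track the paper closely. The classification is carried out exactly as you outline: Riemann--Hurwitz narrows the possible quotient orbifolds, and conjugacy classes correspond to epimorphisms from the orbifold fundamental group onto $\Z_n$ that are injective on stabilizers, counted modulo automorphisms (Theorem \ref{Hur}); the orientation-reversing cases are handled through the induced involution on the quotient by the index-two orientation-preserving subgroup, which is the same bookkeeping your NEC-group language would produce. The positive extendability results are likewise obtained from explicit isometries of $S^3$ preserving a symmetric Heegaard surface built around a pair of dual graphs (Example \ref{cage}), together with the ad hoc pictures for the low-order involutions.

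The gap is in stage (c), where your proposed uniform obstruction is both too weak and partly based on false premises. First, the two components of $S^3\setminus e(\Sigma_2)$ need not be handlebodies for an arbitrary embedding $e$; the paper never assumes Heegaard position, and where it needs structure on a complementary region it extracts it via Dehn's lemma inside the quotient $3$-orbifold (the orientation-reversing case of $\tau_{4,2}$). Second, orthogonal cyclic actions on $S^3$ are not classified up to conjugacy by order and orientation type (free actions versus actions with a fixed circle, inequivalent lens-space quotients); what is true, and what is actually used, is only the short list of possible fixed-point sets (Theorem \ref{fixed point set}). Third, a substantial fraction of the negative cases cannot be settled by checking that fixed-point data fails to fit: the orientation-preserving non-extendability of $\tau_{4,2}$ rests on the Klein bottle not embedding in $S^3$ or $\mathbb{R}P^3$ (Lemma \ref{Klein bottle}); the orientation-reversing non-extendability of $\tau_{4,2}$ and of $\tau_{12}$ rests on an Euler-characteristic/Smith-theory argument forcing the induced involution on a complementary $3$-orbifold to have a regular fixed point, contradicting the parity constraint of Lemma \ref{fixed point of h}; the exclusion of orientation-reversing extensions of $\rho_{6,1}$ and $\tau_{6,1}$ (and of orientation-preserving extensions of $\tau_{6,1},\tau_{6,2},\tau_{6,3}$) requires tracking how the involution permutes the singular arcs of the two $3$-orbifolds bounded by the quotient surface; and $\tau_{6,3}$ is eliminated by restricting to an invariant subtorus and invoking the odd-singular-point obstruction (Lemma \ref{odd is not induced}). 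Without these orbifold-level arguments your plan leaves several of the $\{\varnothing\}$ and single-sign entries of part (2) unproved.
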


\begin{center}
\scalebox{0.7}{\includegraphics{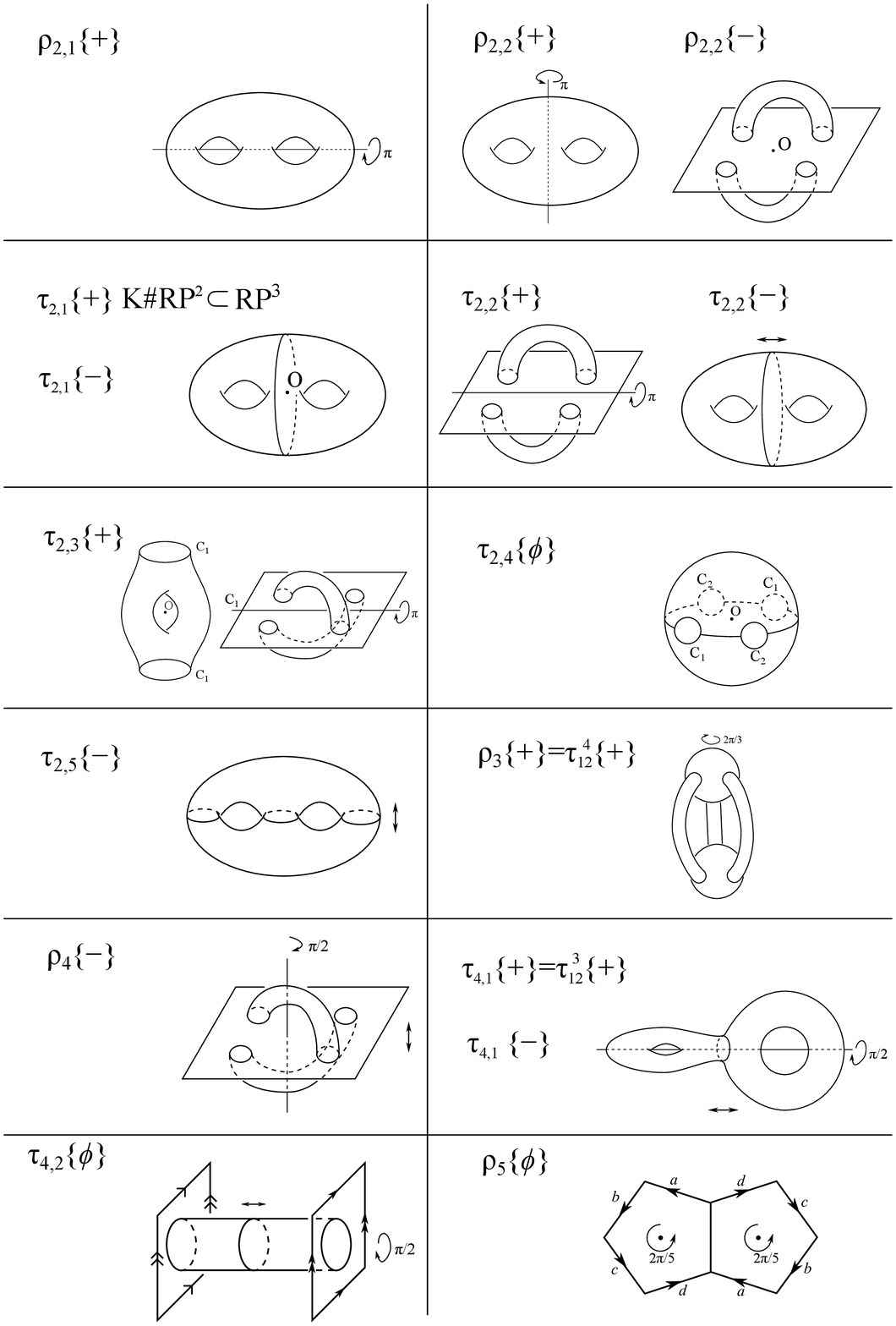}}

Figure 2
\end{center}

\begin{center}
\scalebox{0.75}{\includegraphics{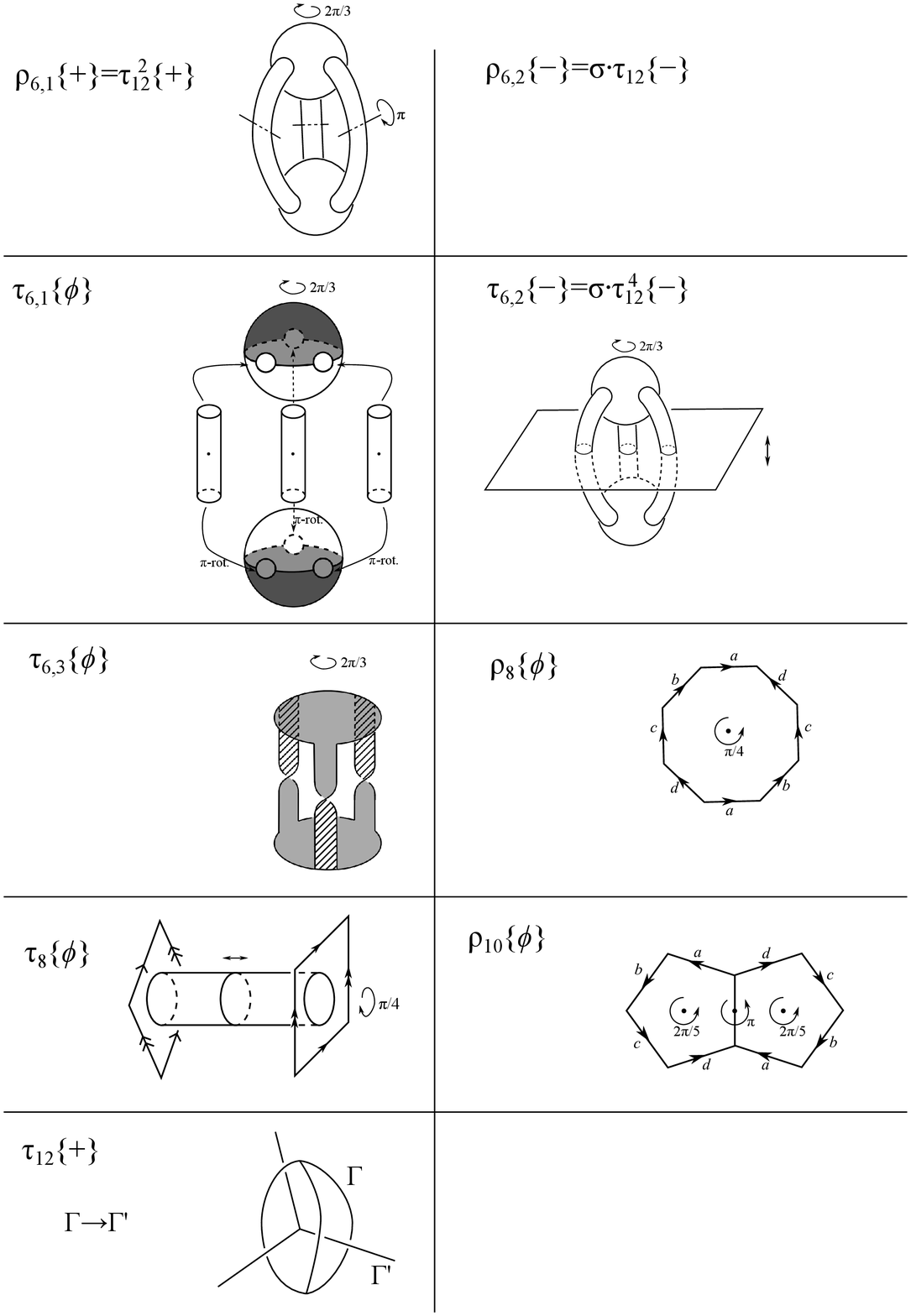}}

Figure 3
\end{center}
\newpage
Let $C_g$ and  $CE_g$ be the maximum orders of periodical maps and
of extendable periodical maps on $\Sigma_g$ respectively; $C_g^o$
and  $CE_g^o$ be the corresponding notions but restricted on
orientation-preserving category. Then it is known that (1) $C^o_g$
is $4g+2$. ${C}_g$ is $4g+4$ when $g$ is even and $4g+2$ when $g$ is
odd \cite{St}. (2) $CE^o_g=2g+2$ if $g$ is even and $2g-2$ if $g$ is
odd \cite{WWZZ1}.

By the construction and argument provided for the map $\tau_{12}$
and $\tau_{2,4}$ (see Example \ref{cage} and Case $(2_-)$), we are
easy to get the following facts, which  do not appear in the
orientation preserving category.

\begin{corollary}
(1) For each even $g$, the maximum order periodic map on $\Sigma_g$
is extendable, that is to say $CE_g=4g+4$.

(2) For each $g$, there is a non-extendable  order 2 symmetry on
$\Sigma_g$.
\end{corollary}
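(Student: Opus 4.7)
The plan is to lift the two flagship examples from Theorem~\ref{classifacation and extendibility}, the extendable cage realization of $\tau_{12}$ and the non-extendable $\tau_{2,4}$, to every relevant genus, as the sentence preceding the corollary already indicates.

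For part (1), the bound $C_g=4g+4$ for $g$ even recalled just above the statement immediately gives $CE_g\le 4g+4$, so only a matching lower bound requires work. I would carry out the higher-genus analogue of the cage of Example~\ref{cage}: place two antipodal points $p_\pm$ on an axis $A\subset S^3$, join them by $2g+2$ arcs arranged symmetrically around $A$, and take $\Sigma_g$ as the boundary of an invariant thin tubular neighbourhood $N$ of this graph. An Euler characteristic count ($1$ vertex and $2g+2$ edges in the spine of $N$) confirms $\partial N\cong\Sigma_g$. The graph admits the same screw-type cyclic symmetry of order $4g+4$ as in Example~\ref{cage} (with $2g+2$ replacing $6$), which extends to the pair $(S^3,\Sigma_g)$ and realizes the extremal periodic map, giving $CE_g\ge 4g+4$ and hence equality.

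For part (2), I would build a non-extendable orientation-reversing involution $\tau_g$ on each $\Sigma_g$ by equivariant connected sum. For $g=2$ take $\tau_2=\tau_{2,4}$. For $g>2$, form the equivariant connected sum of $(\Sigma_2,\tau_{2,4})$ with $(\Sigma_{g-2},\sigma)$ for some involution $\sigma$, performed at a pair of matched invariant disks; the resulting $\tau_g$ restricts to $\tau_{2,4}$ on the $\Sigma_2$ summand. If $\tau_g$ extended to a periodic $\tilde\tau$ on some $(S^3,\Sigma_g)$, an equivariant $2$-sphere enclosing the connected-sum locus would split $(S^3,\Sigma_g,\tilde\tau)$ into equivariant summands, and the first factor would yield a periodic extension of $\tau_{2,4}$ over $(S^3,\Sigma_2)$, contradicting Case~$(2_-)$.

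The step I expect to be the main obstacle is the equivariant-sphere argument in part (2): one must ensure the splitting $2$-sphere can be chosen $\tilde\tau$-invariant and genuinely separating the two summands. This is arranged by performing the summation symmetrically at a locus that any extension $\tilde\tau$ is forced to preserve, for instance around a point of the fixed set of $\tau_{2,4}$, so that equivariant regular-neighbourhood theory supplies an invariant sphere whose two sides realize the desired $\Sigma_2$ and $\Sigma_{g-2}$ summands of the decomposition.
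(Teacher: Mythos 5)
Your part (1) does not need a new construction, and the one you propose is flawed. Example \ref{cage} already produces, for \emph{every} even $g$, the isometry $\tau(g)$ of order $4(g+1)=4g+4$ acting on the pair $(S^3,\Sigma_g)$, so together with Steiger's equality $C_g=4g+4$ the statement $CE_g=4g+4$ follows with no further work; that citation is the paper's entire proof. Your replacement has two problems. First, the count: a graph with two vertices joined by $2g+2$ arcs has Euler characteristic $2-(2g+2)=-2g$, so the boundary of its regular neighbourhood is $\Sigma_{2g+1}$, not $\Sigma_g$ (and your parenthetical ``$1$ vertex and $2g+2$ edges'' is inconsistent with ``two antipodal points''); the cage graph $\Gamma$ of Example \ref{cage} actually has $g+3$ vertices and $2g+2$ edges. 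Second, and more seriously, the order-$(4g+4)$ map does \emph{not} preserve a single graph: $\tau(g)$ sends $a_m\mapsto a_{m+1}$, $b_n\mapsto b_{n+1}$ and therefore interchanges the two dual graphs $\Gamma$ and $\Gamma'$, preserving only the equidistant Heegaard surface and swapping the two handlebodies. A symmetry preserving an ``invariant tubular neighbourhood'' of one graph would restrict to a cyclic action of order $4g+4$ on a genus-$g$ handlebody, which you do not justify and which is not available (for orientation-preserving cyclic actions on handlebodies of genus $\ge 2$ the order is bounded by $2g+2$). The side-swapping is the essential feature of the example, and your construction loses it.

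For part (2) you take a genuinely different route from the paper, and the step you yourself flag as the obstacle is a real gap, not a technicality. A hypothetical extension $\tilde\tau$ of your $\tau_g$ lives over some \emph{unknown} embedding $e\colon\Sigma_g\to S^3$, which need not respect your connected-sum decomposition: the summation circle need not bound a disc on either side of $e(\Sigma_g)$, no $2$-sphere meeting $e(\Sigma_g)$ exactly in that circle need exist, and $\tilde\tau$ is only obliged to preserve $\mathrm{Fix}(\tau_g)$ setwise, not your chosen ``summation locus''. Producing an invariant splitting sphere would require an equivariant Dehn's lemma/sphere theorem argument together with compressibility hypotheses you have not verified, and even then one would have to standardize the action on the ball side before capping off. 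The argument the paper intends is direct and avoids all of this: for each $g\ge 2$ build an orientation-reversing involution $\tau_g$ whose fixed point set consists of two non-separating circles with connected complement (its quotient is a non-orientable surface of Euler characteristic $1-g$ with two boundary circles, which exists for all $g\ge 2$; for $g=2$ this is exactly $\tau_{2,4}$). Then, verbatim as in Case $(2_-)$, Theorem \ref{fixed point set} forbids both extensions: an orientation-preserving extension would force $\mathrm{Fix}(\tau_g)$ to lie in a single circle, and an orientation-reversing one would force $\mathrm{Fix}(\tau_g)$ to be the intersection of $\Sigma_g$ with a separating $2$-sphere and hence to separate $\Sigma_g$. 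You should replace the connected-sum scheme by this fixed-point-set obstruction.
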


The following notions are convenient for our later discussion.

Let $G=\Z_n =\langle h\rangle$, where $h$ is a periodic map of order
$n$ on $(S^3, \Sigma_g)$. According to whether $h$
preserves/reverses the orientation of $S^3$/$\Sigma_g$, we have four
types of extendable group actions.

(1) Type $(+,+)$: $h$ preserves the orientations of both $S^3$ and
$\Sigma_g$.

(2) Type $(+,-)$: $h$ preserves the orientation of $\Sigma_g$ and
reverses that of $S^3$.

(3) Type $(-,+)$: $h$ reverses the orientation of $\Sigma_g$ and
preserves that of $S^3$.

(4) Type $(-,-)$: $h$ reverses the orientations of both $\Sigma_g$
and $S^3$.

One can check easily that the type of the action $G=\langle
h\rangle$ is independent of the choices of the periodical map $h$.
Those notions remind us that Type $(+,+)$ and Type $(+,-)$ are
extending orientation preserving maps on $\Sigma_g$ to those of
$S^3$ in orientation preserving and reversing way respectively, and
Type $(-,+)$ and Type $(-,-)$ are extending orientation reversing
maps on $\Sigma_g$ to those of $S^3$ in orientation preserving and
reversing way respectively. Periodical maps of Type $(+,+)$ and Type
$(-,-)$ do not change the two sides of $\Sigma_g$, and of the
remaining two types  do change the two sides of $\Sigma_g$.

Notice that if $G$ is extendable and $h \in G$ is an element which
equals identity on $\Sigma_g$, then it is easy to see that $h$ is
identity on the whole $S^3$. Hence we always assume that the group
action is faithful on both $\Sigma_g$ and $S^3$.

Suppose $S$ (resp. $P$) is a properly embedded $(n-1)$-manifold
(resp. $n$-manifold) in a n-manifold $M$. We use $M\setminus S$
(resp. $M\setminus P$) to denote the resulting manifold obtained by
splitting $M$ along $S$ (resp. removing $\text{int} P$, the interior
of $P$).

The fixed point set $Fix(G)$ for a finite group action on $X$ is
defined as $\{x\in X\mid g(x)=x, \forall g\in G\}$.

{\bf Content of the paper:} In Section \ref{2example}, we recall
some fundamental results, construct some examples, and prove some
lemmas, which will be used in Section \ref{Proof of the main
theorem} to prove Theorem \ref{classifacation and extendibility}.

\section{Some facts and examples\label{2example}}

We first recall some fundamental results in this section.

Consider 2-disc $D^2$, resp. 3-disc $D^3$ as the unit disc in
$\mathbb{R}^2$ resp. $\mathbb{R}^3$; and 2-sphere $S^2$, resp.
3-sphere $S^3$ as the unit sphere in $\mathbb{R}^3$, resp.
$\mathbb{R}^4$. Then each periodic map on $D^2$, $S^2$ and $B^3$,
$S^3$ can be conjugated into $O(2)$, $O(3)$, $O(4)$ respectively.
Those results in dimension 2 can be found in \cite{Ei} , and in
dimension 3 can be found in \cite{Li} for actions of isolated fixed
points, in \cite{Th} and \cite{BP} for actions with fixed point set
of dimension at least 1, in \cite{Per} for fixed point free actions.

There is exactly one standard  orientation-reversing $\Z_2$-action
on $D^2$:  a reflection about a diameter. There are two standard
orientation-reversing $\Z_2$-actions on $D^3$ (resp. $S^2$): one is
a reflection  about an equator 2-disc (circle), the other is the
antipodal map. There are also two standard orientation-reversing
$\Z_2$-actions on $S^3$ : one is a reflection about an equator
2-sphere, another is the semi-antipodal map which has two fixed
points $\{0,\infty\}$, and on every sphere $\{(x,y,z)\mid
x^2+y^2+z^2=R^2\}$ is an antipodal map.

The facts in the following statement, which can be found in
\cite{Ei}, \cite{Li}, \cite{Th}, \cite{BP}, \cite{Per} and
\cite{Sm},   will be used repeatedly later.

\begin{theorem}\label{fixed point set}
(1) Any orientation reversing periodic map on the 2-disc $D^2$ is
conjugate to a reflection about a diameter.

(2) An orientation reversing $\Z_2$-action on $S^3$($D^3$) must
conjugate to either a reflection  about a $2$-sphere (2-disc) or a
semi-antipodal (antipodal)  map.

(3) The fixed point set of an orientation preserving $\Z_n$-action
on a closed orientable 3-manifold $M$ is a disjoint union of circles
(may be empty).

(4) In (3) if $M=S^3$ then the fixed point set of an orientation
preserving $\Z_n$-action must be either the empty set or an
unknotted circle.
\end{theorem}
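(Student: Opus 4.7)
The plan is to deduce all four statements from the linearization results just quoted above: every periodic map on $D^2$, $S^2$, $D^3$, or $S^3$ is conjugate into $O(2)$, $O(3)$, $O(3)$, or $O(4)$ respectively. Once linearization is in hand, parts (1) and (2) become an explicit case analysis of orientation-reversing involutions in these orthogonal groups, while parts (3) and (4) follow by combining the linearization theorem in dimension 3 with Smith theory and the cited resolution of the Smith conjecture.

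For (1), I would linearize the given orientation-reversing periodic map on $D^2$ to a matrix $A\in O(2)$. Because $A$ is orientation-reversing we have $\det A=-1$, so its eigenvalues are $\{+1,-1\}$, which forces $A^2=I$. Every such $A$ is a reflection about a line through the origin; any two such reflections are conjugate in $O(2)$, giving the single ``diameter'' model. For (2), I would again linearize to $A\in O(3)$ in the $D^3$ case or $A\in O(4)$ in the $S^3$ case. Since $A^2=I$ the eigenvalues are $\pm 1$, and since $\det A=-1$ the multiplicity of $-1$ is odd. For $O(3)$ the only choices are multiplicity $1$ (a reflection in a coordinate plane, fixing an equator $2$-disc) or multiplicity $3$ (the antipodal map $-I$). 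For $O(4)$ the only choices are multiplicity $1$ (a reflection in a coordinate $3$-plane, cutting $S^3$ along an equator $2$-sphere) or multiplicity $3$ (fixed subspace $1$-dimensional, meeting $S^3$ in two antipodal points and acting on each orthogonal $2$-sphere slice as the antipodal map), which is exactly the semi-antipodal map.

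For (3), at each fixed point $p$ of a $\Z_n$-action on the closed orientable $3$-manifold $M$, the cited linearization realizes the local action as an element $A\in O(3)$ of finite order with $\det A=+1$, hence a rotation. The fixed set of a rotation is a line through the origin, so $\mathrm{Fix}(G)$ is locally a $1$-manifold; being closed and globally $1$-dimensional it is a disjoint union of circles. For (4), specializing to $M=S^3$, Smith theory says that the mod-$p$ fixed set of any $\Z_p$-action on a $\Z_p$-homology sphere is itself a $\Z_p$-homology sphere, so when nonempty the fixed circle set must be connected; the Smith conjecture, cited as \cite{Sm}, then gives that this single circle is unknotted.

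The genuinely deep input is the Smith conjecture in part (4); it is imported as a black box from \cite{Sm}, so the main obstacle disappears once that reference is invoked. Everything else in the statement is an elementary application of the linearization theorems already recalled, followed by short linear-algebraic case analysis of involutions in $O(2)$, $O(3)$, and $O(4)$.
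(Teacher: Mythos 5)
The paper offers no proof of this theorem at all---it is stated as a package of known facts with pointers to \cite{Ei}, \cite{Li}, \cite{Th}, \cite{BP}, \cite{Per}, \cite{Sm}---and your derivation (linearize, then do the eigenvalue case analysis of orientation-reversing involutions in $O(2)$, $O(3)$, $O(4)$, then invoke Smith theory and the Smith conjecture) is exactly what those citations amount to, so your argument is correct and matches the paper's intent. Two small caveats: for part (3) on an arbitrary closed orientable $3$-manifold $M$ you need \emph{local} linearization at fixed points (e.g.\ via an averaged invariant Riemannian metric for smooth actions), since the global linearization statements quoted in the paper apply only to $D^2$, $S^2$, $B^3$, $S^3$; and the unknottedness in (4) is the resolved Smith conjecture, whose proof rests on the work cited as \cite{Th}, \cite{BP}, \cite{Per}, whereas \cite{Sm} is the survey in which the problem was posed.
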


We will also  give a brief recall of orbifold theory for later use,
see \cite{Th}, \cite{BMP}, \cite{MMZ}, \cite{Zi}.

Each orbifold we considered has the form $M/G$, where $M$ is an
$n$-manifold and $G$ is a finite group acting faithfully on $M$. For
a point $x\in M$, denote its stable subgroup by $St(x)$, its image
in $M/G$ by $x'$. If $|St(x)|>1$, $x'$ is called a singular point
and the singular index is $|St(x)|$, otherwise it is called a
regular point. If we forget the singularity we get a topological
space
$|M/G|$ which is called underlying space. 

We can also define the covering space and fundamental group for
orbifold. There is a one to one correspondence between orbifold
covering spaces and conjugate classes of subgroup of orbifold
fundamental group, and regular covering spaces correspond to normal
subgroups.
Following if we say covering spaces or fundamental
groups, it always refer to the orbifold corresponding objects.

A simple  picture we should keep in mind is the following: Suppose
$G$ acts   on $(S^3, \Sigma_g)$. Let $\Gamma=\{x\in S^3|\,\exists
\,g\in G, g\ne \text{id}, s.t.\,gx=x\}$. Then $\Gamma/G$ is the
singular set of the 3-orbifold $S^3/G$, and  $ \Sigma_g/G$ is a
2-orbifold with singular set $ \Sigma_g/G\cap \Gamma/G$.

Suppose a finite cyclic group $G=\Z_n$ acts on $\Sigma_g$. Then
$\Sigma_g/G$ is a 2-orbifold  whose singular set contains isolated
points $a_{1},a_{2},\cdots,a_{k}$, with indices $q_{1}\leq
q_{2}\leq\cdots\leq q_{k}$.  Suppose the genus of $|\Sigma_g/G|$ is
$\hat{g}$. We have the Riemann-Hurwitz formula
$$2-2g=n(2-2\hat{g}-\sum_{i=1}^k (1-\frac{1}{q_i})), \,\, \text{$q_i$ dividing $n$}. \qquad(RH)$$




The following Hurwitz type result  is about the existence and the
classification of the actions of finite group $G$ on $\Sigma_g$.

\begin{theorem}\label{Hur}
(1)A finite  cyclic group $G$ acts on the surface $\Sigma_g$ to get
an orbifold $X=\Sigma_g/G$ if and only if there is an exact sequence
$$1\rightarrow \pi_1(\Sigma_g)\rightarrow \pi_1(X)\rightarrow G\rightarrow 1;$$
(2)If two finite group actions $G$ and $G'$ are conjugate, then
their exact sequences have the following diagram

$$\xymatrix{
  1  \ar[r] & \pi_1(\Sigma_g) \ar[d]_{\cong}\ar[r] & \pi_1(X)\ar[d]_{\cong} \ar[r] & G\ar[d]_{\cong}\ar[r] & 1\\
  1  \ar[r] & \pi_1(\Sigma_g') \ar[r] & \pi_1(X') \ar[r] & G'\ar[r] & 1
  }
$$
(3)If two finite group actions $G$ and $G'$ have the above diagram,
and both actions have no reflection fixed curves, then these actions
are conjugate.
\end{theorem}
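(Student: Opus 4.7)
The plan is to handle the three parts in order, using the orbifold covering theory recalled just above the statement. For (1), the forward direction is immediate from the identification of $\Sigma_g\to \Sigma_g/G=X$ as a regular orbifold covering with deck group $G$: the corresponding short exact sequence of orbifold fundamental groups is the one claimed. For the reverse direction, I would invoke the stated correspondence between normal subgroups of $\pi_1(X)$ and regular orbifold covers to produce $\tilde X\to X$ with deck group $G$ from the subgroup $\pi_1(\Sigma_g)\triangleleft \pi_1(X)$. Since $g\geq 2$ forces $\pi_1(\Sigma_g)$ to be torsion-free, the cover $\tilde X$ has no orbifold singularities, i.e., it is a closed surface; and since $\pi_1(\Sigma_g)$ determines $\Sigma_g$ up to homeomorphism for $g\geq 2$, one concludes $\tilde X\cong \Sigma_g$, with the associated deck action being the desired $G$-action.

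For (2), if $f:\Sigma_g\to \Sigma_g'$ is a homeomorphism conjugating $G$ to $G'$, then $f$ descends to an orbifold homeomorphism $\bar f:X\to X'$. The induced maps $f_*$, $\bar f_*$, together with the isomorphism $G\to G'$ given by conjugation by $f$, manifestly fit into the required commutative diagram, giving the vertical arrows.

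For (3), the essential direction, the strategy is first to promote the isomorphism $\pi_1(X)\cong\pi_1(X')$ to an orbifold homeomorphism $\phi:X\to X'$, and then to exploit commutativity. Commutativity implies $\phi_*$ carries $\pi_1(\Sigma_g)\triangleleft \pi_1(X)$ onto $\pi_1(\Sigma_g')\triangleleft \pi_1(X')$, so $\phi$ lifts to a homeomorphism $\tilde\phi:\Sigma_g\to \Sigma_g'$ between the associated regular covers, and this lift intertwines the deck actions: $\tilde\phi\, G\,\tilde\phi^{-1}=G'$, which is precisely the required conjugacy. The promotion step is a Dehn--Nielsen--Baer statement for closed 2-orbifolds. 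Under the hypothesis that no element of $G$ or $G'$ fixes a curve, both $X$ and $X'$ have only isolated cone points as singularities (no mirror boundary), and such a 2-orbifold is classified up to homeomorphism by the genus of the underlying surface, its orientability, and the multiset of cone indices---equivalently, by its orbifold fundamental group viewed as a Fuchsian-type group. Thus the algebraic isomorphism is realized by a geometric orbifold homeomorphism.

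The main obstacle is the Dehn--Nielsen--Baer realization in (3); the role of the no-reflection-fixed-curves hypothesis is precisely to exclude mirror boundary from $X$ and $X'$, so that the classification by genus plus cone indices applies cleanly and the lift $\tilde\phi$ can be produced directly from $\phi$. In the presence of mirror boundary, additional invariants such as corner reflectors and the combinatorial arrangement of boundary strata would be needed, and the lifting argument for the covers would no longer go through with only the stated algebraic data.
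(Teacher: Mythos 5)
Your proposal takes essentially the same route as the paper's proof: part (1) by standard orbifold covering theory, part (2) by descending the conjugating homeomorphism to the quotient orbifolds, and part (3) by first realizing the isomorphism $\pi_1(X)\to\pi_1(X')$ by an orbifold homeomorphism and then lifting it to the covers, where the paper cites \cite[Theorem 5.8.3]{ZVC} for precisely the Dehn--Nielsen--Baer realization step you invoke. The one caveat is that in (3) your stated justification---that cone-type 2-orbifolds are classified up to homeomorphism by genus, orientability, and cone indices---only yields $X\cong X'$, whereas the lifting argument needs the \emph{given} isomorphism $f_*$ to be induced by a homeomorphism; that stronger realization statement is exactly what the cited theorem supplies, so your argument matches the paper's once that reference is made explicit.
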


\begin{proof}
(1) is parallel to the classical covering space theory, and is the
fundamental property in orbifold theory.

(2) Suppose two actions are conjugate induced by an homeomorphism
between surfaces $\tilde{f}: \Sigma_g \rightarrow \Sigma'_g$, then
there is a diagram about covering maps:
$$\xymatrix{
\Sigma_g \ar[d]_{\tilde{f}}\ar[r] & X\ar[d]_{f} \\
\Sigma_g' \ar[r] & X'
  }
$$
Then $\tilde{f}_*$ and $f_*$ gives the first two vertical
homomorphisms between the fundamental groups. The third one is also
well defined as a quotient of $f_*$.

(3) Suppose the two actions have the group-level diagram, and both
actions have no reflection fixed curves, then we have $f_*:
\pi_1(X)\rightarrow\pi_1(X')$. By \cite[Theorem 5.8.3]{ZVC}, this
homomorphism can be induced by some orbifold homeomorphism $f$. Now
because the left square commutes, $f$ can be lift to a homeomorphism
$\tilde{f}$ between covering surfaces such that it induces the first
vertical homomorphism. And such $\tilde{f}$ gives the desired
conjugation between actions.
\end{proof}

To prove our Theorem \ref{classifacation and extendibility}, besides
quoting above results, we need more results and constructions:

Suppose $h$ is an orientation-reversing periodic map of order $2q$
on compact $p$-manifold $U$, $p=2, 3$. Then the index two subgroup
of $G=\langle h\rangle$ is the unique one $G^o=\langle h^2\rangle$
which acts on $U$ orientation preservingly. Let $X=U/G^o$ be the
corresponding $p$-orbifold, $\pi: U\to X$ be the cyclic branched
covering of degree $q$, and $\langle \bar h\rangle$ be  the induced
order 2 orientation reversing action on $X$.

\begin{lemma}\label{fixed point of h} Under the setting above:
Suppose $x\subset X$ is of index $n$ ($n=1$ if $x$ is a regular
point), and $x$ is a fixed point of $\bar h$. Then

(1) $q/n$ must be odd.

(2) If $p=2$, $x$ must be a regular point of $\bar h$.
\end{lemma}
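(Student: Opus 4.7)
The plan is to pass from the point $x \in X$ to any preimage $\tilde x \in \pi^{-1}(x) \subset U$ and study the stabilizer $H = \mathrm{Stab}_G(\tilde x)$ inside the cyclic group $G = \langle h\rangle \cong \Z_{2q}$. The key bookkeeping fact is that $H \cap G^o = \mathrm{Stab}_{G^o}(\tilde x)$ realizes the index of $x$ in the orbifold $X$, so $|H \cap G^o| = n$; once this translation is in place, both parts of the lemma become statements about subgroups and cosets inside a cyclic group.

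For part (1), I would first observe that $\bar h(x) = x$ means $h\tilde x$ lies in the same $G^o$-orbit as $\tilde x$, so $h\tilde x = g\tilde x$ for some $g \in G^o$, and therefore $g^{-1}h \in H$ while $g^{-1}h \notin G^o$. This promotes $H$ from $H \cap G^o$ (of order $n$) to a strictly larger subgroup of order $2n$. Since $G$ is cyclic, there is a unique subgroup of order $2n$, namely $H = \langle h^{q/n}\rangle$, and this subgroup sits inside $G^o = \langle h^2\rangle$ exactly when $q/n$ is even. Since $H \not\subset G^o$, we conclude that $q/n$ is odd.

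For part (2), I would linearize the action of $H$ near $\tilde x$ by restricting to a small $H$-invariant disc neighborhood $D \cong D^2$ of $\tilde x$ in the surface $U$. The coset $H \setminus G^o$ contributes $n$ orientation-reversing periodic self-maps of $D$. By Theorem \ref{fixed point set}(1) each such map is conjugate to a reflection and therefore has order exactly $2$; by faithfulness of the $G$-action on $U$ these elements already have order $2$ in $G$. A cyclic group has a unique element of order $2$, so at most one element of $H$ can be an orientation-reversing involution, forcing $n = 1$ and making $x$ a regular point.

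I expect no serious obstacle beyond this; the only conceptual step that deserves a careful sentence is the identification of the orbifold index of $x$ with $|H \cap G^o|$, which is the standard isotropy/index correspondence for the branched covering $\pi: U \to X$. Everything else is subgroup arithmetic in $\Z_{2q}$ together with the $2$-dimensional part of Theorem \ref{fixed point set}.
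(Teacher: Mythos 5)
Your proposal is correct and takes essentially the same route as the paper: both arguments identify the stabilizer of a preimage of $x$ as the cyclic subgroup of order $2n$ in $\Z_{2q}$, observe that $\bar h(x)=x$ forces this stabilizer to contain an orientation-reversing element (hence to be $\langle h^{q/n}\rangle$ with $q/n$ odd), and for $p=2$ invoke Theorem \ref{fixed point set}(1) to conclude that such an element has order $2$ on a disc neighborhood, giving $n=1$. The only cosmetic difference is that you phrase the argument via cosets and the uniqueness of the order-$2$ element of a cyclic group, where the paper works with the explicit $\bar h$-invariant ball $N(x)$ and the transitive $h$-action on its preimage components.
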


\begin{proof} (1) Let $N({x})=B^p\subset |X|$ be a $\bar h$-invariant
$p$-ball.  Then $\pi^{-1}(x)=\{\tilde{x_1},.....\tilde{x_l}\}$,
where $l=q/n$, and $\pi^{-1}(B^p)=\{B_1,...,B_l\}$ is a disjoint
union $p$-balls which is an $h$-invariant set.  Moreover the action
of $h$ on $\{B_1,...,B_l\}$ is transitive and orientation reserving
(under the induced orientation.) The stable subgroup of $\tilde
x_1$, $St(\tilde x_1)\subset \Z_{2q}=\langle h\rangle$ is a cyclic
group $\Z_{2n}=\langle h^l\rangle$. Since $St(\tilde x_1)$ contains
orientation reversing element, $l$ must be odd.

(2) In dimension 2, any orientation reversing periodic map must be
conjugated to a reflection about a diameter $L$ of $B^2$ by Theorem
\ref{fixed point set} (1). Therefore any finite cyclic group
containing an orientation reversing element must be $\Z_2$, that is
to say $n=1$ and therefore $x$ is a regular point.
\end{proof}



\begin{lemma}\label{odd is not induced}
Suppose $h$ is an orientation preserving periodic map on $\Sigma_g$
and the number of singular points of $X=\Sigma_g/\langle h \rangle$
is odd. Then $h$ can not extend to $S^3$ in the type $(+,+)$.
\end{lemma}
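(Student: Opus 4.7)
Suppose for contradiction that $h$ extends to a periodic map $\tilde h$ on $S^3$ of type $(+,+)$, so $\tilde h$ preserves the orientations of both $S^3$ and $\Sigma_g$. The plan is to show the number of singular points of $X=\Sigma_g/\langle h\rangle$ must then be even, via a separation/intersection argument in the 3-orbifold $S^3/\langle\tilde h\rangle$.

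First I would observe that the two components $A,B$ of $S^3\setminus\Sigma_g$ are each $\langle\tilde h\rangle$-invariant: $\tilde h$ permutes $\{A,B\}$, and since it preserves orientations of $S^3$ and of $\Sigma_g$ simultaneously, it must preserve the transverse orientation of $\Sigma_g$, hence fixes $A$ and $B$ setwise. Passing to the quotient, the underlying surface $|X|$ separates $|S^3/\langle\tilde h\rangle|$ into the images of $A$ and $B$.

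Next, for every $k$ with $1\le k<n$, $\operatorname{Fix}(\tilde h^k)\subset S^3$ is either empty or an unknotted circle by Theorem \ref{fixed point set}(3),(4). Their images form the singular set $\Sigma=\bigsqcup_i C_i$ of the 3-orbifold $S^3/\langle\tilde h\rangle$, a disjoint union of circles in $|S^3/\langle\tilde h\rangle|$. Since the singular points of $X$ are exactly the points of $|X|$ lying in $\Sigma$, the count we want is $\sum_i |C_i\cap |X||$. I would then verify transversality at each singular point $\bar x\in |X|\cap C_i$: lift to $x\in\Sigma_g$ fixed by some $\tilde h^k\ne\mathrm{id}$; the linearization of $\tilde h^k$ at $x$ is orthogonal on $T_xS^3$, acts as a non-trivial rotation on $T_x\Sigma_g$ (because the fixed point set of a non-trivial cyclic action on a surface is discrete), and acts as $+1$ on the normal direction (because $\tilde h^k$ preserves the two sides $A,B$). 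Hence the $+1$-eigenspace of $\tilde h^k$ at $x$ is exactly the normal line to $\Sigma_g$, so $\operatorname{Fix}(\tilde h^k)$ meets $\Sigma_g$ transversely at $x$; this transversality persists under the quotient.

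Finally, because $|X|$ separates $|S^3/\langle\tilde h\rangle|$, any closed curve in $|S^3/\langle\tilde h\rangle|$, in particular each singular circle $C_i$, has algebraic intersection number $0$ with $|X|$; combined with transversality this forces $|C_i\cap |X||$ to be even for each $i$. Summing over $i$, the total number of singular points of $X$ is even, contradicting the assumption that it is odd. I expect the main obstacle to be the transversality step: one must carefully rule out the possibility that the fixed arc of $\tilde h^k$ is tangent to $\Sigma_g$, which is where the orientation-preserving, side-preserving nature of the extension is used crucially.
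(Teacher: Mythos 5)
Your argument is correct and is essentially the paper's own proof: both show that the singular set of $S^3/\langle\tilde h\rangle$ is a disjoint union of circles (Theorem \ref{fixed point set}(3)) whose intersection with the closed separating surface $|\Sigma_g/\langle h\rangle|$ must consist of an even number of points, contradicting oddness. Your write-up merely makes explicit the transversality and side-preservation details that the paper leaves implicit.
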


\begin{proof}
Otherwise let $\tilde h: (S^3,\Sigma_g)\to (S^3,\Sigma_g)$ be a such
extension. As an orientation preserving periodic map on $S^3$, its
fixed point set must be disjoint union of circles by Theorem
\ref{fixed point set} (3), and the singular set $\Gamma_{\langle
\tilde h\rangle}$ of the orbifold $S^3/\langle \tilde h\rangle$ must
be a disjoint union of circles. Then the singular set of the
2-orbifold $\Sigma_g/\langle h\rangle$, as the intersection of those
circles and $|\Sigma_g/\langle h\rangle|$, must has even number of
points, which is a contradiction.
\end{proof}

\begin{lemma}\label{Klein bottle}
The Klein bottle $K$ can not embed into $\mathbb{R}P^3$
\end{lemma}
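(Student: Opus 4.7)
The plan is to argue by contradiction via the double cover $p\colon S^3 \to \mathbb{R}P^3$. Assume $K$ embeds in $\mathbb{R}P^3$ and set $\widetilde K = p^{-1}(K) \subset S^3$. Alexander duality shows that no closed non-orientable surface $\Sigma$ can embed in $S^3$: otherwise $\widetilde H_0(S^3\setminus\Sigma;\mathbb{Z}) \cong H^2(\Sigma;\mathbb{Z}) = \mathbb{Z}/2$, contradicting the fact that $\widetilde H_0$ is free abelian. So every component of $\widetilde K$ is orientable; and were $\widetilde K$ disconnected it would consist of two copies of $K$, which the same observation rules out. Hence $\widetilde K$ is a connected orientable double cover of $K$, i.e.\ the orientation double cover, a torus $T^2$.

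Next I analyze the two sides of $\widetilde K$ in $S^3$. The antipodal deck transformation $A$ preserves $\widetilde K$, acts on it freely, and reverses its orientation (since the quotient $K$ is non-orientable). Because $A$ preserves the orientation of $S^3$, it must also reverse the normal direction to $\widetilde K$, and thus swaps the two components of $S^3\setminus\widetilde K$. By the classical theorem (loop theorem plus Schoenflies) that every embedded torus in $S^3$ bounds a solid torus on at least one side, combined with the $A$-symmetry, both sides of $\widetilde K$ are solid tori. Consequently $\widetilde K$ is a Heegaard torus in $S^3$, and the complement $X := \mathbb{R}P^3 \setminus \mathrm{int}\,N(K)$ of a tubular neighborhood of $K$ is the $A$-quotient of one of these solid tori, hence itself a solid torus.

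Finally, the decomposition $\mathbb{R}P^3 = N(K)\cup_T X$ with $T=\partial N(K)\cong T^2$, combined with Seifert--van Kampen, gives
\[
\mathbb{Z}/2 \;=\; \pi_1(\mathbb{R}P^3)\;=\;\pi_1(K)\big/\langle\!\langle m\rangle\!\rangle,
\]
where $m\in \pi_1(T) = \pi_1^+(K)$ is the meridian of $X$. Writing $\pi_1(K)=\langle a,b\mid aba^{-1}b\rangle$ with $\pi_1^+(K)=\langle a^2,b\rangle\cong\mathbb{Z}^2$ and $m=a^{2u}b^v$ with $\gcd(u,v)=1$, a direct abelianization check shows that $\pi_1(K)/\langle\!\langle m\rangle\!\rangle$ has abelianization $\mathbb{Z}$, $\mathbb{Z}/2\oplus\mathbb{Z}/2$, $\mathbb{Z}/4|u|$, or $\mathbb{Z}/2|u|\oplus\mathbb{Z}/2$ depending on the parities of $u$ and $v$, and none of these equals $\mathbb{Z}/2$. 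This contradiction completes the proof. The main delicate point is the $A$-equivariant solid-torus analysis in the second step; if one wished to avoid it, one could instead quote the classification of free orientation-preserving involutions of $S^3$ that preserve a Heegaard torus.
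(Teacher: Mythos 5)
Your proof is correct, but it takes a genuinely different route from the paper's. The paper cuts $\mathbb{R}P^3$ along a transverse $\mathbb{R}P^2$ to reduce to a properly embedded surface $S\subset D^3$ with $\chi(S)=0$, notes that proper surfaces in $D^3$ are orientable so $S$ is an annulus, and then argues that regluing its boundary circles by the antipodal map of $\partial D^3$ can only produce a torus. You instead pass to the double cover $S^3\to\mathbb{R}P^3$: non-embeddability of non-orientable closed surfaces in $S^3$ forces the preimage of $K$ to be the connected orientation double cover $T^2$; the deck involution reverses the coorientation and so swaps the two sides, which by Alexander's torus theorem are then both solid tori; a van Kampen computation of $\pi_1(K)/\langle\!\langle m\rangle\!\rangle$ for a primitive $m\in\pi_1^+(K)$ rules out $\mathbb{Z}/2$ in every case. (The two arguments are dual in spirit -- both exploit that the orientation double cover of $K$ is a torus and that the covering involution exchanges sides.) Your version is longer and invokes heavier inputs (Alexander duality, Alexander's torus theorem), but each step is a named theorem and the final obstruction is an explicit group computation covering all cases; the paper's cut-and-reglue argument is shorter and more elementary but, as written, leaves some case analysis implicit (e.g. why $S$ must be a single annulus rather than, say, a closed component of $\chi=0$ together with spheres, and the orientation bookkeeping in the regluing). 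All of your individual claims check out: the only orientable connected double cover of $K$ is $T^2$; the antipodal map preserves the orientation of $S^3$ and reverses that of $T^2$, hence swaps complementary components; and the abelianizations $\mathbb{Z}$, $\mathbb{Z}/2\oplus\mathbb{Z}/2$, $\mathbb{Z}/4|u|$, $\mathbb{Z}/2|u|\oplus\mathbb{Z}/2$ all have order $\neq 2$ (the last three have order $4|u|\ge 4$).
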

\begin{proof}
Otherwise there is an embedding $K\subset\mathbb{R}P^3$. We can
assume that $K$ is transversal to some
$\mathbb{R}P^2\subset\mathbb{R}P^3$. Cutting $\mathbb{R}P^3$ along
this  $\mathbb{R}P^2$ we  get the $D^3$, and  $K$ becomes  an
embedded proper surface $S\subset D^3$ with $\chi(S)=\chi(K)=0$.
Note that every embedded proper surface in $D^3$ must be orientable,
so $S$ must be an annulus.   But  the two boundaries of $S$ in
$\partial D^3$ must be identified by the antipodal map on $\partial
D^3$ before the cutting, that is to say we can only get a torus from
$S$, not $K$, which is a contradiction.
\end{proof}

The constructions below provides various extendable periodic maps on
$\Sigma_g$.

\begin{example}\label{cage}
For every $g>1$, we will construct some finite cyclic actions on a
Heegaard splitting $S^3=V_g\cup_{\Sigma_g} V_g'$.

Consider $S^3$ as the unit sphere in $\mathbb{C}^2$,
$$S^3=\{(z_1, z_2)\in\mathbb{C}^2\mid |z_1|^2+|z_2|^2=1\}.$$
Let $$a_m=(e^{\frac{m\pi i}{2}}, 0), m=0, 1, 2, 3.$$
$$b_n=(0, e^{\frac{n\pi i}{g+1}}), n=0, 1, \dots, 2g+1.$$ Connect each
$a_{2l}$ to each $b_{2k}$ with a geodesic in $S^3$ and each
$a_{2l+1}$ to each $b_{2k+1}$ with a geodesic in $S^3$, here $l=0,
1$ and $k=0, 1, \dots, g$. Then we get two two-parted graphs
$\Gamma, \Gamma'\in S^3$ each has $2$ vertices and $g+1$ edges, and
they are in the dual position, see Figure 4. The left one is a
sketch map, the right one presents exactly how the graphs look like.
All the graphs have been projected to $\mathbb{R}^3$ from $S^3$.

\begin{center}
\scalebox{0.5}{\includegraphics{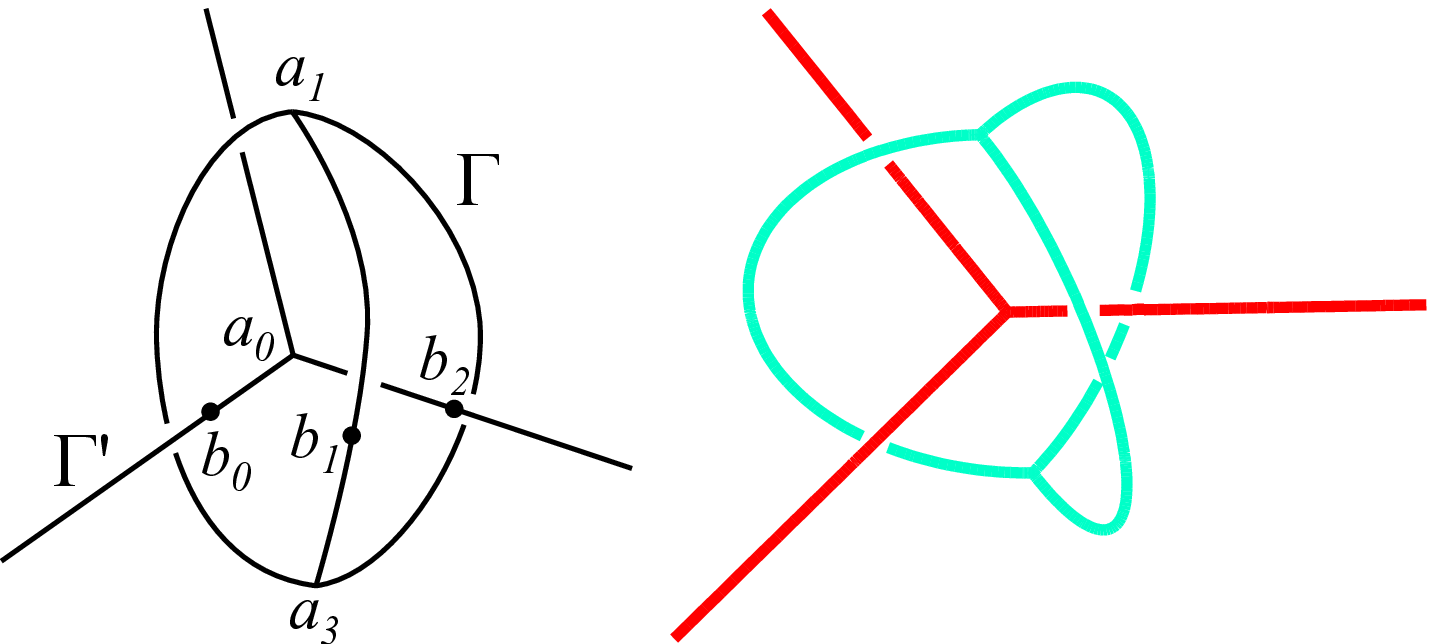}}

Figure 4
\end{center}

We have the following three isometries on $S^3$ which preserves the
graph $\Gamma\cup\Gamma'$:
$$\tau(g): (z_1, z_2)\mapsto(iz_1, e^{\frac{\pi i}{g+1}}z_2),$$
$$\rho: (z_1, z_2)\mapsto(-z_1, z_2),$$
$$\sigma: (z_1, z_2)\mapsto(\bar{z}_1, z_2).$$
Here $\tau$ and $\rho$ preserve the orientation of $S^3$ and
$\sigma$ reverses the orientation of $S^3$. If $g$ is even,
$\langle\tau(g)\rangle$ has order $4(g+1)$.


The points in $S^3$ have equal distance to $\Gamma$ and $\Gamma'$
forms a closed subsurface having genus $g$. This is our $\Sigma_g$.
It cuts $S^3$ into two handlebodies $V_g$ and $V_g'$ which are
neighborhoods of $\Gamma$ and $\Gamma'$. All the isometries
$\tau(g), \rho, \sigma$ preserve $\Gamma\cup\Gamma'$, hence must
also preserve $\Sigma_g$. One can check easily $\tau, \rho, \sigma$
give examples of extendable periodical maps on $(S^3, \Sigma_g)$ of
types $(-,+)$ $(+,+)$, $(-,-)$. In particular $\tau(g)$ gives the a
periodic map on $(S^3, \Sigma_g)$ of type  $(-,+)$ and of order
$4(g+1)$ when $g$ is even. Notice that this is also the maximum
order of cyclic group action on $\Sigma_g$ when $g$ is even. More
concrete and intuitive picture to indicates how $\tau(2)$ acts on
$\Sigma_2$ is given in Figure 5 (for another such description see
\cite{Wa}).

\begin{center}
\scalebox{0.6}{\includegraphics{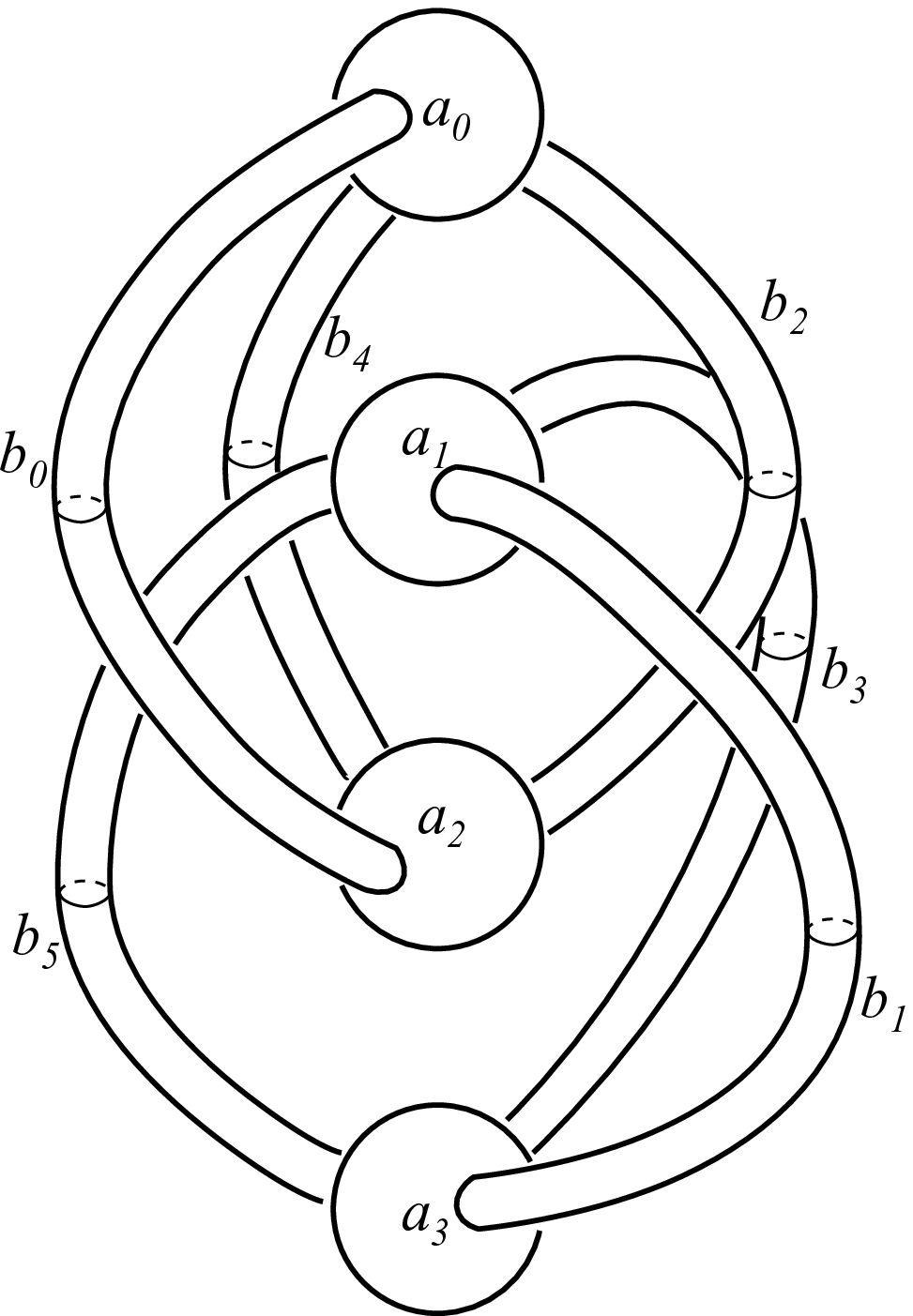}}
$$a_i\mapsto a_{i+1}, b_i\mapsto b_{i+1}$$
Figure 5
\end{center}

Compositions of $\tau(g), \rho, \sigma$ provide extendable
periodical maps on $\Sigma_g$ of required orders and types, say:

(1)  $\sigma\tau(2)$ is a map on $(S^3, \Sigma_2)$ of type $(+,-)$
and order $6$ .

(2)  $\sigma \tau^4(2)$ on $(S^3, \Sigma_2)$ is of type $(-,-)$ and
order 6.



\end{example}

\section{Extendabilities of
periodical maps on $\Sigma_2$.}\label{Proof of the main theorem}

Suppose $\Z_n=\langle h\rangle$ acts on $\Sigma_2$, we know $n\leq
12$. We will discuss all the periods $n\in \{2,3,\dots,12\}$. For
each period $n$, we will divide the discussion into two cases: the
orientation preserving maps on $\Sigma_2$, denote as $(n_+)$; and
orientation reversing maps on $\Sigma_2$, denote as $(n_-)$. For
each case $(n_\epsilon)$, $\epsilon=\pm$, we first discuss the part
(1) of Theorem \ref{classifacation and extendibility}, the
classification of periodic maps; then the part (2) of Theorem
\ref{classifacation and extendibility}, the extendabilities  of
those maps.

We remark that for each odd $n$, the situation is simpler, since all
the possible actions must be orientation-preserving.  If $n=2k$,
then $h$ induces an involution $\bar h$ on $ X =\Sigma_2/\langle
h^2\rangle$, the orbifold corresponding to the unique index two
sub-group $\Z_k$ of $\Z_{2k}$, and if $h$ is orientation-reversing,
$n$ must be $2k$, and  $\bar h$ is orientation-reversing on $ X$.

$(2_+)$ Classification: Now $X=\Sigma_2/\Z_2$ is a closed orientable
2-orbifold with $\chi(X)=\chi(\Sigma_2)/2=-1$ by (RH), and $|X|$
must be a sphere or a torus. Every branched point of $X$ must be
index $2$. So $X$ is either a sphere with six index $2$ branched
points, denoted by $X_1=S^2(2,2,2,2,2,2)$, or a torus with two index
$2$ branched points, denoted by $X_2=T(2,2)$.

By Theorem \ref{Hur} (1), we have an exact sequence $1\rightarrow
\pi_1(\Sigma_2)\rightarrow\pi_1(X_i)\rightarrow\Z_2\rightarrow 1$,
and for each branched point $x$ of $X$, $St(x)$
 must be mapped isomorphically
onto $\Z_2$.

Note $\pi_1(X_1)=\langle a,b,c,d,e,f\mid abcdef=1,
a^2=b^2=c^2=d^2=e^2=f^2=1\rangle,$ the only surjection $\pi_1(X_1)
\to \Z_2=\langle t\mid t^2=1\rangle$ is
$(a,b,c,d,e,f)\mapsto(t,t,t,t,t,t)$, hence this $\Z_2$ action is
unique up to conjugacy, denoted  by $\rho_{2,1}$, whose action on
$\Sigma_2$ is indicated in the left of Figure 6.

Note $\pi_1(X_2)=\langle a,b,u,v\mid aba^{-1}b^{-1}=uv,
u^2=v^2=1\rangle,$ the possible surjections $\pi_1(X_2)\to
\Z_2=\langle t\mid t^2=1\rangle$ are
\begin{equation*} (a,b,u,v)\mapsto
\begin{cases} (1,1,t,t)  \\(1,t,t,t)\\(t,1,t,t)\\(t,t,t,t)
\end{cases}
\end{equation*}
Consider the automorphisms of $\pi_1(X_2):
(a,b,u,v)\mapsto(a,ba,u,v)$, $(a,b,u,v)\mapsto(a,uab,ava^{-1},u)$,
all these representations are equivalent. So this $\Z_2$ action is
unique up to conjugacy, denote it by $\rho_{2,2}$, whose action on
$\Sigma_2$ indicated in the middle and right of Figure 6.

\begin{center}
\scalebox{0.5}{\includegraphics{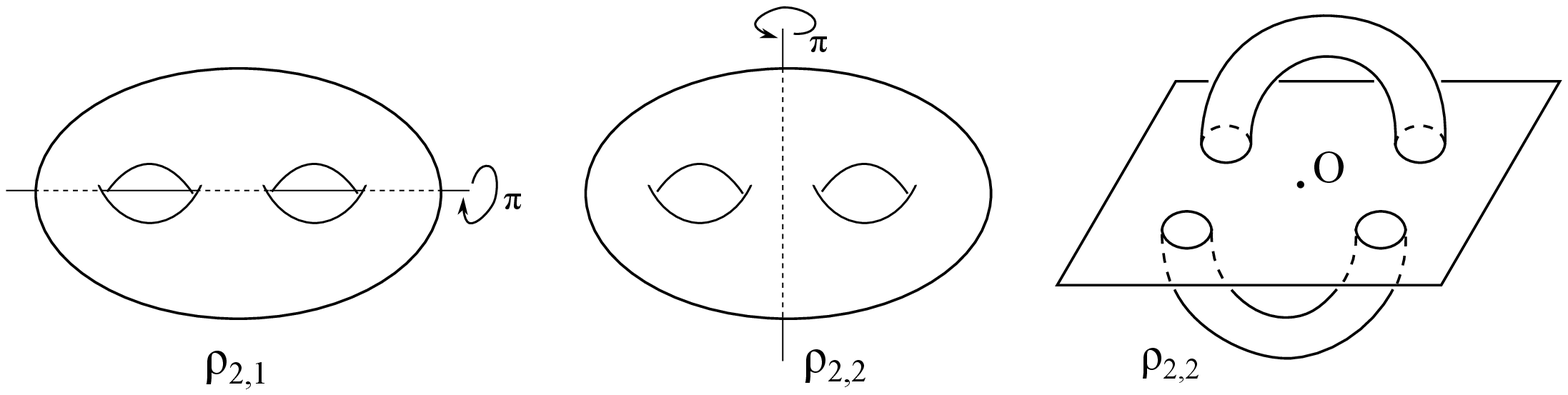}}

Figure 6
\end{center}

Extendibility: As indicated in Figure 6, $\rho_{2,1}$ can extend
orientation-preservingly.

The fixed points set of any orientation-reversing $\Z_2$ action on
$S^3$ is either two points or the 2-sphere  by Theorem \ref{fixed
point set} (2), which can not intersect $\Sigma_2$ with $6$ isolated
points, and therefore $\rho_{2,1}$ can not extend
orientation-reservingly. So we have $\rho_{2,1}\{+\}$.

As indicated in the middle of Figure 6, $\rho_{2,2}$ can extend
orientation-preservingly. If we choose the embedding $\Sigma_2\to
S^3$ as the right-side  of Figure 6,  one can see $\rho_{2,2}$ can
also extend to $S^3$ orientation-reversingly, as a semi-antipodal
map about the origin point $O$ with two fixed points $O$ and
$\infty$. So we have $\rho_{2,2}\{+,-\}$.

$(2_-)$  Classification: Suppose $\tau$ is an order $2$
orientation-reversing homeomorphism of $\Sigma_2$. Consider the
fixed point set $\text{fix}(\tau)$.

If $\text{fix}(\tau)=\varnothing$, then the map $\Sigma_2\rightarrow
\Sigma_2/\Z_2$ is a covering map, and $\Sigma_2/\Z_2$ is a
non-orientable closed surface with $\chi=-1$, which is the connected
sum of a torus and a projective plane. Because the covering map is
unique, the action is unique up to conjugacy, denote it by
$\tau_{2,1}$, whose action on $\Sigma_2$ indicated in Figure 7. Here
it is a semi-antipodal map with fixed points $O$ and $\infty$.

\begin{center}
\scalebox{0.5}{\includegraphics{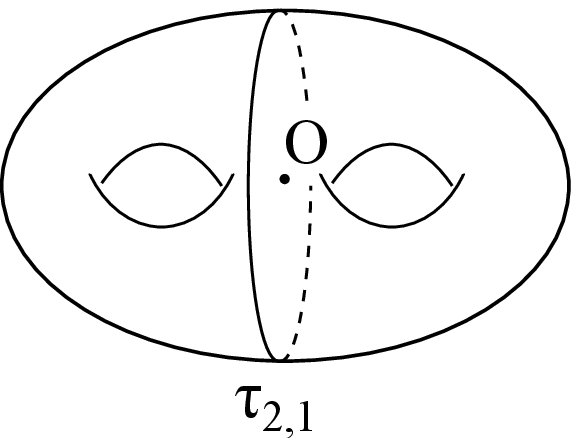}}

Figure 7
\end{center}

Now suppose $\text{fix}(\tau)\neq\varnothing$. Because $\tau$ is
orientation-reversing, $\text{fix}(\tau)$ must be the union of
disjoint circles on $\Sigma_2$. Suppose $\text{fix}(\tau)$ contains
at least one separating circle $C_0$, then $\tau$ changes the two
components of $\Sigma_2\setminus C_0$. So
$\text{fix}(\tau)=\{C_0\}$. In this case the action is also unique,
denote it by $\tau_{2,2}$, whose action on $\Sigma_2$ indicated in
Figure 8. On the left the symbol $\leftrightarrow$ means a
reflection about the middle plane, and on the right is a
$\pi$-rotation.

\begin{center}
\scalebox{0.6}{\includegraphics{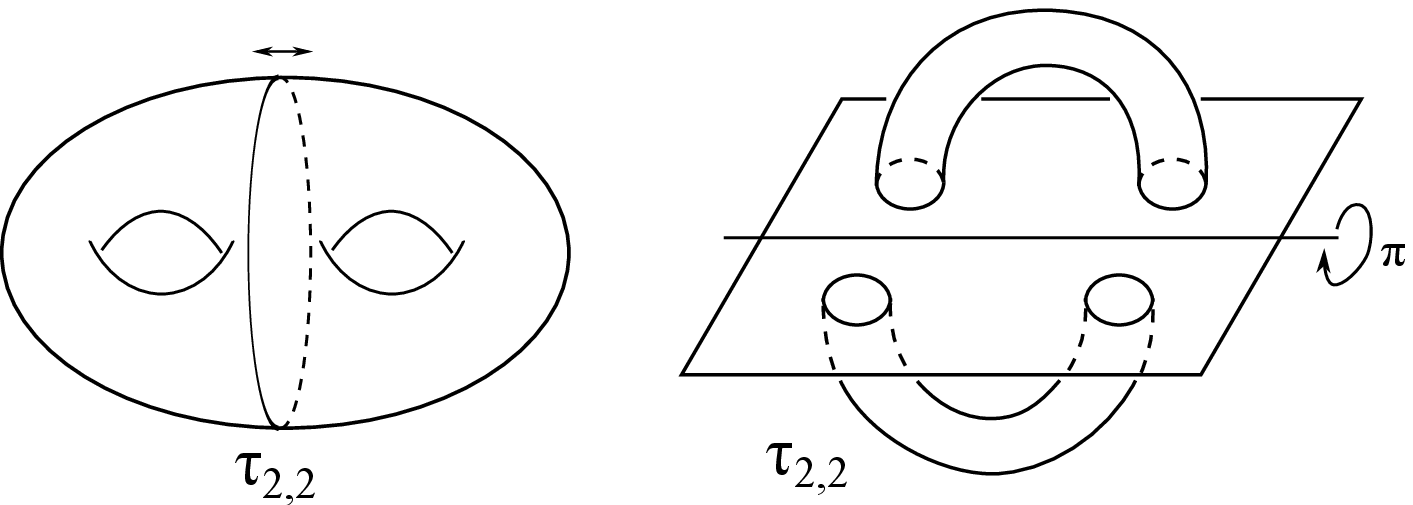}}

Figure 8
\end{center}

Now suppose each component of $\text{fix}(\tau)$ is non-separating.

If $|\text{fix}(\tau)|=1$, say, $\text{fix}(\tau)=\{C_1\}$,
$\Sigma_2\setminus C_1$ is a torus with two holes. And the $\Z_2$
action on $\Sigma_2\setminus C_1$ is fixed point free and changes
the two boundary components, so it induces a double cover to a
non-orientable surface. This is also unique, denoted by
$\tau_{2,3}$, whose action on $\Sigma_2$ indicated in Figure 9.

\begin{center}
\scalebox{0.6}{\includegraphics{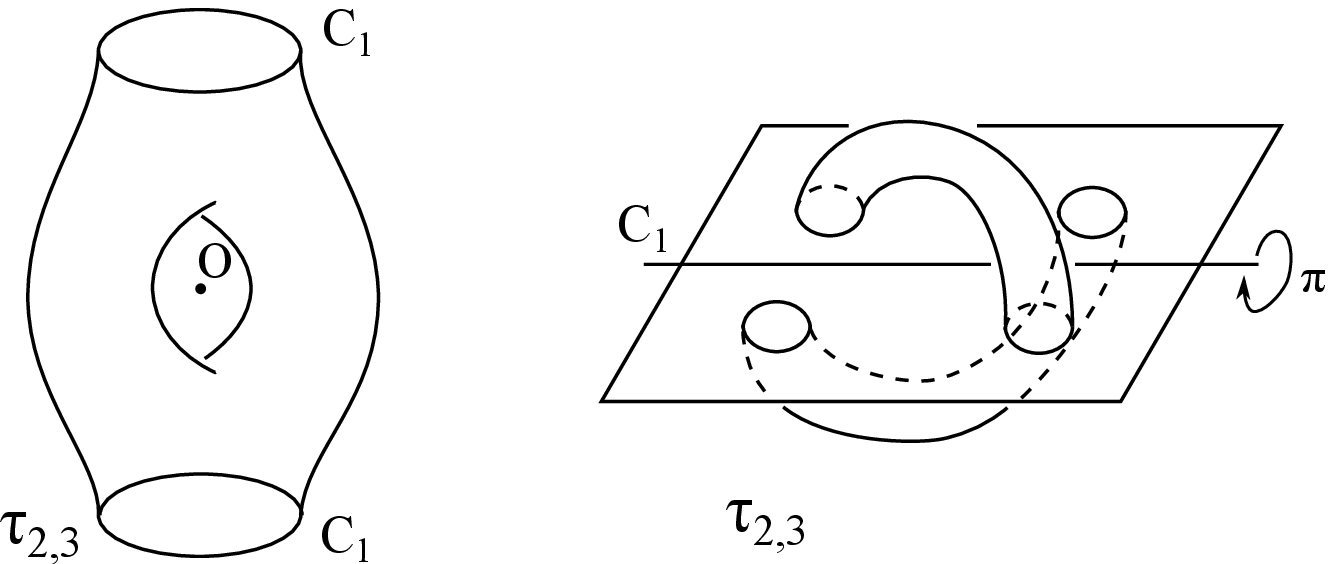}}

Figure 9
\end{center}

\begin{center}
\scalebox{0.6}{\includegraphics{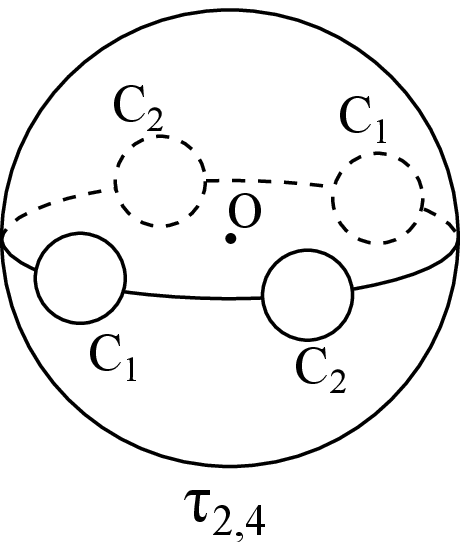}}

\nopagebreak

 Figure 10
\end{center}

If $|\text{fix}(\tau)|=2$, say, $\text{fix}(\tau)=\{C_1,C_2\}$, then
$\Sigma_2\setminus \{C_1,C_2\}$ must be  a sphere with four holes.
And the $\Z_2$ action on $\Sigma_2\setminus \{C_1,C_2\}$ is
fixed-free and changes the four boundary components as two pairs.
The action is also unique, denoted by $\tau_{2,4}$.

If $|\text{fix}(\tau)|=3$, say, $\text{fix}(\tau)=\{C_1,C_2,C_3\}$,
$\Sigma_2\setminus \{C_1,C_2,C_3\}$ are two 3-punctured spheres. The
action is also unique, denoted by $\tau_{2,5}$.

\begin{center}
\scalebox{0.6}{\includegraphics{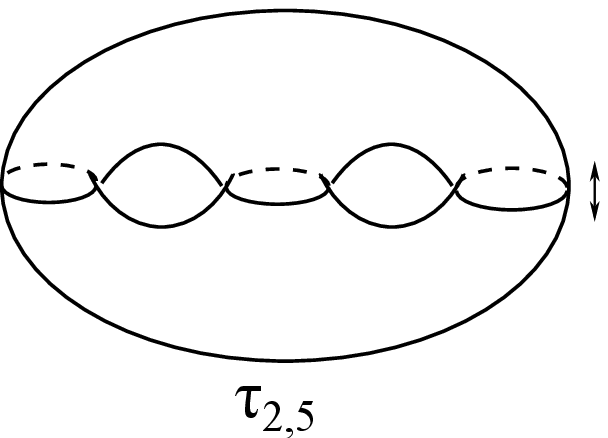}}

Figure 11
\end{center}

If $|\text{fix}(\tau)|\geq 4$, then $\Sigma_2\setminus \{C_i\}$ has
more than two components, which is impossible.

Extendibility: $\tau_{2,1}$ can extend to $S^3$ as a semi-antipodal
map under the embedding  $\Sigma_2\to S^3$ indicated in Figure 7.
Choose an embedding of $\mathbb{R}P^2$ in $\mathbb{R}P^3$, using a
local connected sum with a torus $T$, we get an embedding of $RP^2\#
T$ into $\mathbb{R}P^3$. The double cover of $(\mathbb{R}P^3,RP^2\#
T)$ is $(S^3,\Sigma_2)$. This shows $\tau_{2,1}$ can also extend
orientation-preservingly. So we have $\tau_{2,1}\{+,-\}$.

$\tau_{2,2}$ can obviously extend orientation-reversingly (the left
side of Figure 8). It can also extend orientation-preservingly, as
indicated in right side  of Figure 8. So we have
$\tau_{2,2}\{+,-\}$.

The fixed point set of any orientation-reversing $\Z_2$ action on
$S^3$ is either two points or a 2-sphere  by Theorem  \ref{fixed
point set} (2), and the 2-sphere is separating, which can not
intersect $\Sigma_2$ with a union of non-separating circles. Since
the fixed point set of both $\tau_{2,3}$ and $\tau_{2,4}$ are union
of non-separation circles, neither $\tau_{2,3}$ nor $\tau_{2,4}$ can
extend orientation-reversingly.

The fixed points set of any orientation-preserving $\Z_2$ action on
$S^3$ is either the emptyset or a circle by Theorem \ref{fixed point
set} (4), which can not intersect $\Sigma_2$ with more than one
circle. Since the fixed point set of both $\tau_{2,4}$ and
$\tau_{2,5}$ contain more than one circles, neither $\tau_{2,4}$ nor
$\tau_{2,5}$ can extend orientation-preservingly.

Note $\tau_{2,3}$ can extend orientation-preservingly as in Figure
9. and $\tau_{2,5}$ can extend orientation-reversingly, as shown in
Figure 11.

So we have $\tau_{2,3}\{+\}$, $\tau_{2,4}\{\varnothing\}$, and
$\tau_{2,5}\{-\}$

$(3_+)$ Classification: Here $X=\Sigma_2/\Z_3$ is a closed
orientable 2-orbifold with $\chi(X)=\chi(\Sigma_2)/3=-2/3$, and
every branched point of $X$ must has index $3$. So $X$ is either a
sphere with four index $3$ branched points, denoted by
$X_1=S^2(3,3,3,3)$, or a torus with one index $3$ branched points,
denoted by $X_2=T(3)$.

As above, we have the exact sequence $1\rightarrow
\pi_1(\Sigma_2)\rightarrow\pi_1(X_i)\rightarrow\Z_3\rightarrow 1$,
 and for each branched point $x$ of $X$, $St(x)$
 must be mapped isomorphically
onto $\Z_3$.

For $X_1$, $\pi_1(X_1)=\langle a,b,c,d\mid abcd=1,
a^3=b^3=c^3=d^3=1\rangle,$ up to some permutation of the bases, the
only possible surjections $\pi_1(X_1)\to \Z_3=\langle t\mid
t^3=1\rangle$ is $(a,b,c,d)\mapsto(t,t,t^2,t^2)$, hence this $\Z_3$
action is unique up to conjugacy, denote it by $\rho_3$.

For $X_2$, $\pi_1(X_2)=\langle a,b,u\mid aba^{-1}b^{-1}=u,
u^3=1\rangle,$ from its abelianzation, we know there is no
finite-injective surjection to $\Z_3$, so there is no corresponds
$\Z_3$ action.

Extendibility:  In the embedding $\Sigma_2\in S^3$ of Example 2.5,
one can check directly that
$\Sigma_2/\langle\tau^4_{12}\rangle=S^2(3,3,3,3)$, so $\rho_3$ can
be the restriction of $\tau^4_{12}$, where $\tau_{12}=\tau(2)$ in
Example 2.5. So $\rho_3$ has the extension $\tau^4_{12}$ over $S^3$,
which is type $(+,+)$, and we have $\rho_3\{+\}$.

$(4_+)$ Classification: Here $X=\Sigma_2/\Z_4$ is a closed
orientable 2-orbifold with $\chi(X)=-1/2$. Every branched point of
$X$ has index either $2$ or $4$. So $X$ is either
$X_1=S^2(2,2,2,2,2)$, or $X_2=S^2(2,2,4,4)$, or $X_3=T(2)$. As above
we have an exact sequence $1\rightarrow
\pi_1(\Sigma_2)\rightarrow\pi_1(X_i)\rightarrow\Z_4\rightarrow 1,$
 and
for each branched point $x$ of $X$, $St(x)$ must inject into $\Z_4$.

For $X_1$, $\pi_1(X_1)=\langle a,b,c,d,e\mid abcde=1,
a^2=b^2=c^2=d^2=e^2=1\rangle,$ each generator is corresponding to
some branched point, and must be mapped to $t^2$ in $\Z_4=\langle
t\mid t^4=1\rangle$, and it is impossible, so there is no
corresponding $\Z_4$ action.

For $X_3$, $\pi_1(X_3)=\langle a,b,u\mid aba^{-1}b^{-1}=u,
u^2=1\rangle,$ $u$ must be mapped to $t^2$ in $\Z_4=\langle t\mid
t^4=1\rangle$, and it is impossible, so there is no corresponding
$\Z_4$ action.

For $X_2$, $\pi_1(X_2)=\langle a,b,x,y\mid abxy=1,
a^2=b^2=x^4=y^4=1\rangle,$ up to some permutation of the bases, the
only possible representation from $\pi_1(X_2)$ to $\Z_4=\langle
t\mid t^4=1\rangle$ is $(a,b,x,y)\mapsto(t^2,t^2,t,t^3)$, hence this
$\Z_4$ action is unique up to equivalent, denote it by $\rho_4$.

\begin{center}
\scalebox{0.6}{\includegraphics{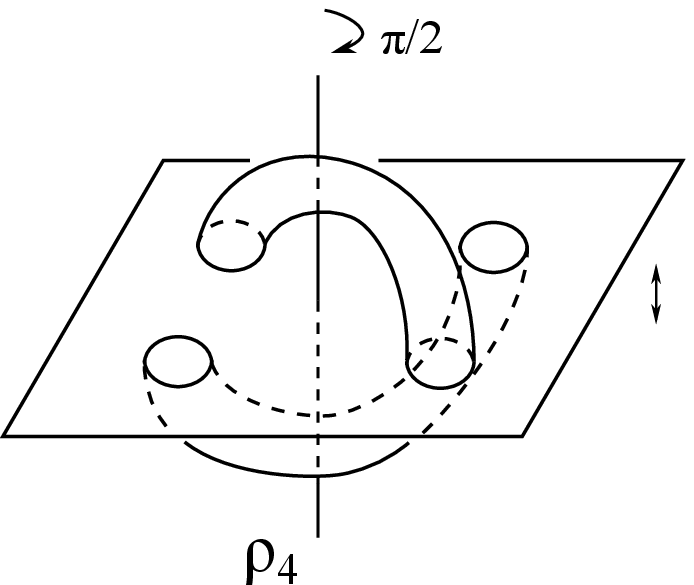}}

Figure 12
\end{center}

Extendibility: From Figure 12, $\rho_4$ can extend
orientation-reversingly as a $\pi/4$-rotation together with a
reflection. Note $\rho_4$ has fixed point. Suppose $\rho_4$ can
extend orientation-preservingly, then by Lemma \ref{fixed point set}
(4), the singular set of $S^3/\rho_4$ must be a circle of index 4,
therefore the index of singular points of $X_2$ must be also 4,
which contradicts that $X_2=S^2(2,2,4,4)$. So we have $\rho_4\{-\}$.

$(4_-)$ Classification: Consider the orbifolds $ X =\Sigma_2/\langle
h^2\rangle$. From the discussion in $(2_+$), either
$X_1=S^2(2,2,2,2,2,2)$, or $X_2=T^2(2,2)$. By Lemma \ref{fixed point
of h} (1),  the orientation-reversing involution $\bar h_i$ on $X_i$
has no regular fixed point, so the orbifold $X_i/\Z_2=\Sigma_2/\Z_4$
is either a projective plane with three index $2$ branched points,
denoted by $Y_1=\mathbb{R}P^2(2,2,2)$, or a Klein bottle $K$ with
one index $2$ branched point, denoted by $Y_2=K(2)$.

For $Y_1$, $\pi_1(Y_1)=\langle x,a,b,c\mid abc=x^2,
a^2=b^2=c^2=1\rangle,$ up to some permutation of the bases, the
possible representations from $\pi_1(Y_1)$ to $\Z_4=\langle t\mid
t^4=1\rangle$ are
\begin{equation*} (x,a,b,c)\mapsto
\begin{cases} (t,t^2,t^2,t^2)  \\(t^3,t^2,t^2,t^2)
\end{cases}
\end{equation*}
and an automorphism of $\Z_4$: $t\mapsto t^3$ changes the two
representations. Hence this $\Z_4$ action is unique up to
equivalent, denote it by $\tau_{4,1}$.

For $Y_2$, $\pi_1(Y_2)=\langle x,a,b\mid aba^{-1}b=x, x^2=1\rangle,$
the possible representations from $\pi_1(Y_2)$ to $\Z_4=\langle
t\mid t^4=1\rangle$ are
\begin{equation*} (x,a,b)\mapsto
\begin{cases} (t^2,*,t)  \\(t^2,*,t^3)
\end{cases}
\end{equation*}
here $*$ means $a$ may be mapped to any element in $\Z_4$. Consider
the automorphism of $\pi_1(Y_2): (x,a,b)\mapsto(x,ab,b)$ and some
automorphism of $\Z_4$, all these representations are equivalent.
Hence this $\Z_4$ action is unique up to equivalent, denote it by
$\tau_{4,2}$, whose action on $\Sigma_2$ indicated in Figure 13.

\begin{center}
\scalebox{0.6}{\includegraphics{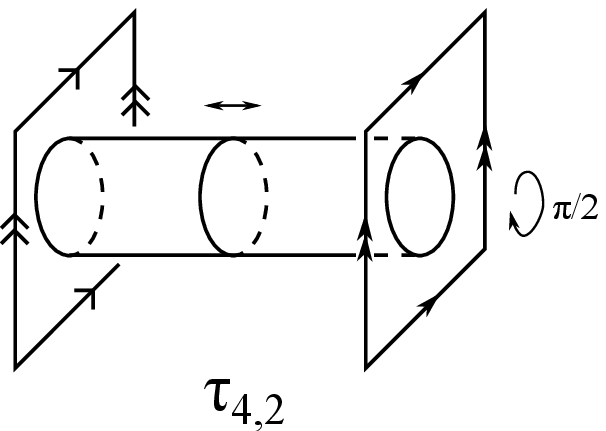}}

Figure 13
\end{center}

Extendibility: In the embedding $\Sigma_2\in S^3$  of Example 2.1,
one can check directly that
$\Sigma_2/\langle\rho\tau^3_{12}\rangle=RP^2(2,2,2)$, so
$\tau_{4,1}$ can be the restriction of $\tau^3_{12}$, therefore has
the extension $\tau^3_{12}$ over $S^3$, which is of type $(-,+)$. It
can also extend orientation-reversingly: see Figure 14. So we have
$\tau_{4,1}\{+,-\}$.

\begin{center}
\scalebox{0.6}{\includegraphics{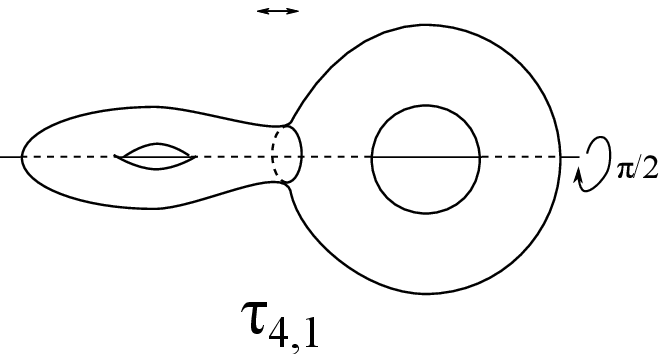}}

Figure 14
\end{center}


$\tau_{4,2}$ can not extend orientation-preservingly: otherwise,
there will be an embedding of Klein bottle $K$ into $|S^3/\Z_4|$.
Here $S^3/\Z_4$ has branched points, it can not be a $\Z_4$-Len
space, so $|S^3/\Z_4|$ must be $S^3$ or $\mathbb{R}P^3$. But $K$ can
not embed into either $S^3$ or $\mathbb{R}P^3$ by Lemma \ref{Klein
bottle}.

$\tau_{4,2}$ can not extend orientation-reversingly: by applying
Dehn's Lemma, the orbifold $X_2$  must bound a $3$-orbifold $\Theta$
in $S^3/\Z_2$, with $|\Theta|$ a solid torus. There is a proper
branched arc $L$ of index 2 in $\Theta$. It is easy to see
$\chi(|\Theta\setminus N(L)|)=-1$, so  $\bar h_2|$ on
$|\Theta\setminus N(L)|$ must have fixed points. This means $\bar
h_2$ on $\Theta$ must have regular fixed points, which contradicts
to Lemma \ref{fixed point of h} (1). So we have
$\tau_{4,2}\{\varnothing\}$.

($5_+$) Now $X=\Sigma_2/\Z_5$ is a closed orientable 2-orbifold,
$\chi(X)=\chi(\Sigma_2)/5=-2/5$, and every branched point of $X$
must has index $5$. So $X=S^2(5,5,5)$. As above, we have an exact
sequence $1\rightarrow
\pi_1(\Sigma_2)\rightarrow\pi_1(X)\rightarrow\Z_5\rightarrow 1$,
 and
for each branched point $x$ of $X$, $St(x)$ must be mapped
isomorphically onto $\Z_5$.

Now $\pi_1(X)=\langle a,b,c\mid abc=1, a^5=b^5=c^5=1\rangle,$ up to
some permutation of the bases, the possible surjections  $\pi_1(x)
\to \Z_5=\langle t\mid t^5=1\rangle$ are
\begin{equation*} (a,b,c)\mapsto
\begin{cases} (t,t,t^3)  \\(t,t^2,t^2)\\(t^3,t^3,t^4)\\(t^2,t^4,t^4)
\end{cases}
\end{equation*}
Consider the automorphism of $\Z_5:t\mapsto t^2$ and $t\mapsto t^4$,
all these representations are conjugate. Hence this $\Z_5$ action is
unique up to conjugacy, denote it by $\rho_5$, whose action on
$\Sigma_2$ was indicated in Figure 21.

Extendibility: Suppose $\rho_5$ is extendable, then the extension on
$(S^3, \Sigma_2)$ must be type $(+,+)$, which contradicts to Lemma
\ref{odd is not induced}, since the orbifold $\Sigma_2/\langle\rho_5
\rangle=S^2(5,5,5)$ contains three singular points. So we have
$\rho_5\{\varnothing\}$.

($6_+$) Classification:  The orbifolds correspond to the index-$2$
subgroup must be $X=(S^2;3,3,3,3)$, as we see in $(3_+)$. Then
orientation-preserving $\Z_2$ actions on $X$ give the  orbifold
$X/\Z_2=\Sigma_2/\Z_6$, which is either $Y_1=S^2(2,2,3,3)$, or
$Y_2=S^2(3,6,6)$.

For $Y_1$, $\pi_1(Y_1)=\langle a,b,x,y\mid abxy=1,
a^2=b^2=x^3=y^3=1\rangle,$ up to some permutation of the bases, the
only possible representation from $\pi_1(Y_1)$ to $\Z_6=\langle
t\mid t^6=1\rangle$ is $(a,b,x,y)\mapsto(t^3,t^3,t^2,t^4)$, hence
this $\Z_6$ action is unique up to equivalent, denote it by
$\rho_{6,1}$, whose action on $\Sigma_2$ indicated in Figure 15.

\begin{center}
\scalebox{0.8}{\includegraphics{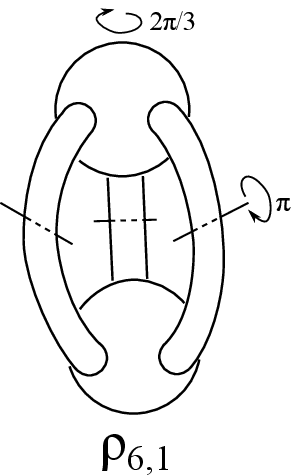}}

Figure 15
\end{center}

For $Y_2$, $\pi_1(Y_2)=\langle a,b,x\mid abx=1,
a^6=b^6=x^3=1\rangle,$ the possible representations from
$\pi_1(Y_2)$ to $\Z_6=\langle t\mid t^6=1\rangle$ are
\begin{equation*} (a,b,x)\mapsto
\begin{cases} (t,t,t^4)  \\(t^5,t^5,t^2)
\end{cases}
\end{equation*}
Consider the automorphism of $\Z_6$: $t\mapsto t^5$, these
representations are equivalent. Hence this $\Z_6$ action is unique
up to conjugacy, denote it by $\rho_{6,2}$.

Extendibility: In the embedding $\Sigma_2\subset S^3$  in Example
2.1, one can check directly that
$\Sigma_2/\langle\tau^2_{12}\rangle=S^2(2,2,3,3)$, so $\rho_{6,1}$
can be the restriction of $\tau^2_{12}$, therefore has the extension
$\tau^2_{12}$ over $S^3$, which is of type $(+,+)$.

$\rho_{6,1}$ can not extend orientation-reversingly: Otherwise, the
extension must be of type $(+,-)$, which must interchanges two
components of $S^3\setminus \Sigma_2$. Denote by $\Theta_1$ and
$\Theta_2$ the two 3-orbifold bounded by $X=S^2(3,3,3,3)$, and by
$\{A,B,C,D\}$ the  four branched points on $X$. By apply Smith
theory, we may suppose that two branched arcs in $\Theta_1$ are $AB$
and $CD$, and two branched arcs in $\Theta_2$ are $BC$ and $DA$ ,
see Figure 16. Note the induced involution $\bar \rho_{6,1}$ on $X$
is a $\pi$-rotation about two ordinary points, and  interchanges
$\Theta_1$ and $\Theta_2$. So $\bar \rho_{6,1}(A)\ne A$. If $\bar
\rho_{6,1}$ interchanges $A$ and $B$, $\bar \rho_{6,1}$ will keep
the singular arc $AB$ invariant;  if $\bar \rho_{6,1}$ interchanges
the pairs $(A, B)$ and the pair $(C,D)$, then $\bar \rho_{6,1}$
interchanges the singular arcs $AB$ and $CD$. In either case we
would have $\bar \rho_{6,1}(\Theta_1)=\Theta_1$, this is a
contradiction.
 So we have $\rho_{6,1}\{+\}$.

\begin{center}
\scalebox{0.6}{\includegraphics{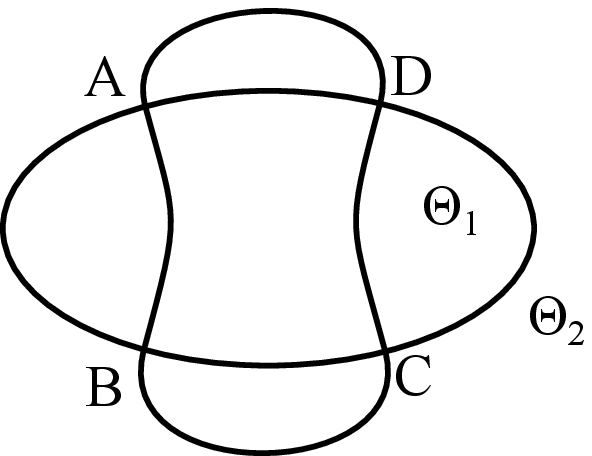}}

Figure 16
\end{center}

Since $\Sigma_2/\langle\rho_{6,2}\rangle=S^2(3,6, 6)$ has three
singular points, $\rho_{6,2}$ can not extend to $S^3$ in type
$(+,+)$ by Lemma \ref{odd is not induced}. In the embedding
$\Sigma_2\in S^3$ in Example 2.1, one can check that
$\Sigma_2/\langle\sigma\tau_{12}\rangle=S^2(3,6,6)$, so $\rho_{6,2}$
can be the restriction of $\sigma\tau_{12}$, therefore has the
extension $\sigma\tau_{12}$ over $S^3$ which is of type $(+,-)$. So
we have $\rho_{6,2}\{-\}$.

$(6_-)$ Classification: Consider the orbifolds $ X =\Sigma_2/\langle
h^2\rangle=(S^2;3,3,3,3)$. The action of $\bar h$ on $X$ is either
the antipodal map, corresponding to the orbifold
$Y_1=\mathbb{R}P^2(3,3)$, or a reflection about circle which
containing no branched points, corresponding to the orbifold
$Y_2=\bar D^2(3,3)$: a disk with reflection boundary and branched
points $(3,3)$.

For $Y_1$, $\pi_1(Y_1)=\langle a,b,x\mid ab=x^2, a^3=b^3=1\rangle,$
the possible representations from $\pi_1(Y_1)$ to $\Z_6=\langle
t\mid t^6=1\rangle$ are
\begin{equation*} (a,b,x)\mapsto
\begin{cases} (t^2,t^2,t^2)  \\(t^4,t^4,t^4)\\(t^2,t^4,t^3)
\end{cases}
\end{equation*}

But the first two are not surjective, so only the third one is
possible. Hence this $\Z_6$ action is unique up to equivalent,
denote it by $\tau_{6,1}$. An illustration of  the  action
$\tau_{6,1}$ on $\Sigma_2$ is based on Figure 17: denote the union
of three tubes, and its top and bottom boundary components by $A$,
$\partial A_+$ and $\partial A_-$; two 3-punctured 2-spheres by
$S_+$ and $S_-$. Then
$$\Sigma_2=S_+\cup_{\partial S_+=\partial A_+} A \cup_{\partial S_-=\partial A_-} \cup S_-,$$
where the identification ${\partial S_+=\partial A_+}$ is given in
the most obviously way, the identification $\partial S_-=\partial
A_-$ and that of ${\partial S_+=\partial A_+}$ is differ by $\pi$.
Then $\tau_{6,1}$ restricted on each tube is an antipodal map, and
$\tau_{6,1}$ restricted on $S_+\cup S_-$ is a reflection about the
plane between them.

\begin{center}
\scalebox{0.8}{\includegraphics{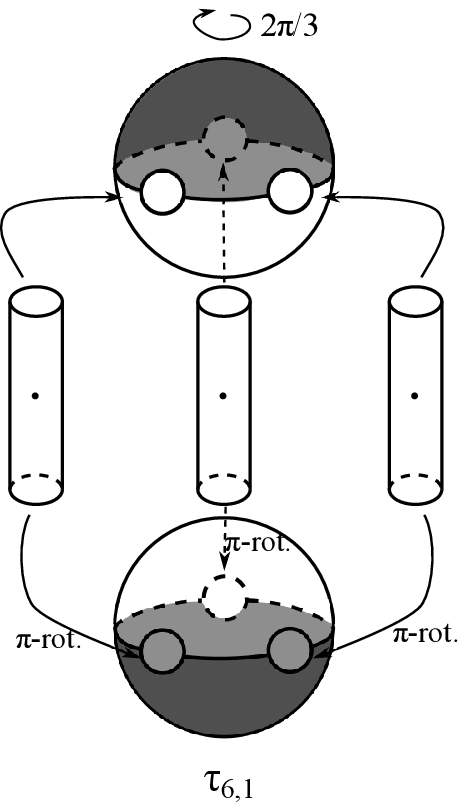}}

Figure 17
\end{center}

For $Y_2$, $\pi_1(Y_2)=\langle a,b,x,r\mid ab=x,r^2=1,xr=rx,
a^3=b^3=1\rangle,$ the possible representations from $\pi_1(Y_1)$ to
$\Z_6=\langle t\mid t^6=1\rangle$ are
\begin{equation*} (a,b,x,r)\mapsto
\begin{cases} (t^2,t^4,1,t^3)\\(t^2,t^2,t^4,t^3)  \\(t^4,t^4,t^2,t^3)
\end{cases}
\end{equation*}
Denote the first one by $\tau_{6,2}$. The second one and the third
one are equivalent by a automorphism of $\Z_6$: $t\mapsto t^5$,
denote this one by $\tau_{6,3}$, whose action on $\Sigma_2$
indicated in Figure 18: joining two disks with $3$ bands, each with
a half twist, then we get a surface $F$, which can be view  as the
Seifert surface of the trefoil knot. Its neighborhood in $S^3$ is a
$V_2\cong F\times [-1,1]$. Then the action of $\tau_{6,3}$ on
$\Sigma_2-\partial V_2$ is the composition of a $2\pi/3$ rotation
and a reflection about $F\times \{0\}$.

\begin{center}
\scalebox{0.7}{\includegraphics{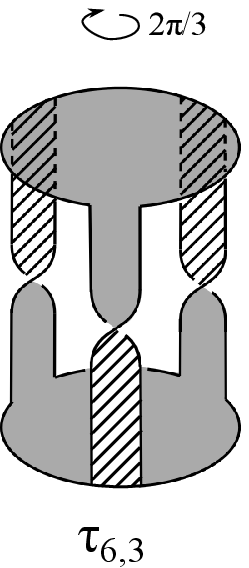}}

Figure 18
\end{center}

Extendibility:  Just applying similar proof for the non-existence of
$\rho_{6,1}\{-\}$, we can show that
 $\tau_{6,1}$, $\tau_{6,2}$ and $\tau_{6,3}$ can not extend
orientation-preservingly.

$\tau_{6,1}$ can not extend orientation-reversingly: Otherwise
$\tau_{6,1}$ acts on $(S^3, \Sigma_2)$ in the type of $(-,-)$. Since
$Y_1=\mathbb{R}P^2(3,3)$, the $\Z_2$-action $\tau_{6,1}^3$ has no
fixed point on $\Sigma_2$, hence $fix(\tau_{6,1}^3)$  on $S^3$ is
two pints $\{x,y\}$  by Theorem \ref{fixed point set}. Since
$\tau_{6,1}^2(x)$ must be a fixed point of $\tau_{6,1}^3$ and
$\tau_{6,1}^2$ is of order 3, we must have $\tau_{6,1}^2(x)=x$, then
$\tau_{6,1}(x)=x$. Hence the branched set
$S^3/\langle\tau_{6,1}^2\rangle$ is  a circle $C$ of index 3 and
$|S^3/\langle\tau_{6,1}^2\rangle|=S^3$. Denote by $\Theta$ a
3-orbifold bounded by $X= S^2(3,3,3,3)$ containing $\bar x$, the
image of $x$. Then $|\Theta|=D^3$, $\Theta\cap C$ in two branched
arcs of index 3. But the orientation revering involution $\bar
\tau_{6,1}$ on $(\Theta, X)$ is the antipodal map, hence $\bar x$ is
the only fixed point of $\bar \tau_{6,1}$ on $\Theta$. Clearly $\bar
x\in \Theta\cap \bar C$, so $\bar \tau_{6,1}$ keeps each  arc of
$\Theta\cap \bar C$ invariant, and therefore $\bar \tau_{6,1}$ on
$\Theta$ has at least two fixed points, this is a contradiction. So
we have $\tau_{6,1}\{\varnothing\}$.

In the embedding $\Sigma_2\in S^3$ in Example 2.1, one can check
that $\Sigma_2/\langle\sigma\tau_{12}^4\rangle=\bar D^2(3,3)$, so
$\tau_{6,2}$ can be the restriction of $\sigma\tau_{12}^4$,
therefore has the extension $\sigma\tau_{12}^4$ over $S^3$ which is
of type $(-,-)$. So we have $\tau_{6,2}\{-\}$.

$\tau_{6,3}$ can not extend orientation-reversingly. Otherwise
$\tau_{6,3}$ acts on $(S^3, \Sigma_2)$ in the type of $(-,-)$. Now
$fix(\tau_{6,3}^3)\cap \Sigma_2$ is a separating circle $C$. Now
$fix(\tau_{6,3}^3)=S^2\subset S^3$. Let $\Sigma_2\backslash
C=\Sigma_{+}\cup \Sigma_{-}$ and $S^2\backslash C=D_{+}\cup D_{-}$.
So the $\Z_3$-action $\tau_{6,3}^2$  on $\Sigma_{+}\cup D_{+}\cong
T$ is extendable extendable action. But the orbifold
$T/\Z_3=(S^2;3,3,3)$, which can not embed in
$S^3/\langle\tau_{6,3}^2\rangle$ by Lemma \ref{odd is not induced}.
So we have $\tau_{6,3}\{\varnothing\}$.

$(7_+)$ Classification: Now $X=\Sigma_2/\Z_7$ is a closed orientable
2-orbifold with $\chi(X)=\chi(\Sigma_2)/7=-2/7$, and every branched
point of $X$ must has index $7$. There is no such orbifold.

$(8_+)$ Classification: Consider the orbifolds $ X =\Sigma_2/\langle
h^2\rangle=(S^2;2,2,4,4)$. Let $Y=\Sigma_2/\Z_8=X/\Z_2$, then $Y$ is
either $Y_1=(S^2;2,2,2,4)$, or $Y_2=(S^2;4,4,4)$ or
$Y_3=(S^2;2,8,8)$.

For $Y_1$, $\pi_1(Y_1)=\langle a,b,c,x\mid abcx=1,
a^2=b^2=c^2=x^4=1\rangle,$ there is no surjection $\pi_1(Y_1)\to
\Z_8$, so there is no corresponding $\Z_8$ action.

For $Y_2$, $\pi_1(Y_2)=\langle a,b,c\mid abc=1,
a^4=b^4=c^4=1\rangle,$ there is no surjection $\pi_1(Y_2)\to \Z_8$
so there is no corresponding $\Z_8$ action.

For $Y_3$, $\pi_1(Y_3)=\langle a,b,x\mid abx=1,
a^8=b^8=x^2=1\rangle,$ the possible surjections $\pi_1(Y_3)\to
\Z_8=\langle t\mid t^8=1\rangle$ are
\begin{equation*} (a,b,x)\mapsto
\begin{cases} (t,t^3,t^4)  \\(t^5,t^7,t^4)
\end{cases}
\end{equation*}
Consider the automorphism of $\Z_8$: $t\mapsto t^5$, these
representations are equivalent. Hence this $\Z_8$ action is unique
up to equivalent, denote it by $\rho_8$, whose action on $\Sigma_2$
indicated in Figure 19.

\begin{center}
\scalebox{0.6}{\includegraphics{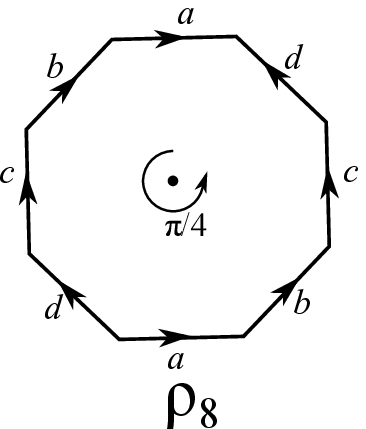}}

Figure 19
\end{center}

Extendibility: Note $\rho_8^2=\rho_4$. If either $\rho_8(+)$ or
$\rho_8(-)$ exists, we have $\rho_4(+)$, which contradicts that
$\rho_4\{\varnothing\}$. So we have $\rho_8\{\varnothing\}$.

$(8_-)$ Classification:  Consider the orbifolds $ X
=\Sigma_2/\langle h^2\rangle=(S^2;2,2,4,4)$. By Lemma 2.3 (1), $\bar
h$ on $X$ has no regular fixed points on $X$, so it must be a
antipodal map, therefore $Y=X/\Z_2$ must be $(\mathbb{R}P^2;2,4)$.
$\pi_1(Y)=\langle a,b,x\mid ab=x^2, a^2=b^4=1\rangle,$ the possible
representations from $\pi_1(Y)$ to $\Z_8=\langle t\mid t^8=1\rangle$
are
\begin{equation*} (a,b,x)\mapsto
\begin{cases} (t^4,t^2,t^3)  \\(t^4,t^2,t^7)\\(t^4,t^6,t^5)\\(t^4,t^6,t)
\end{cases}
\end{equation*}
Consider the automorphism of $\Z_8:t\mapsto t^5$ and $t\mapsto t^7$,
all these representations are equivalent. Hence this $\Z_8$ action
is unique up to equivalent, denote it by $\tau_8$, whose action on
$\Sigma_2$ indicated in Figure 20.

\begin{center}
\scalebox{0.6}{\includegraphics{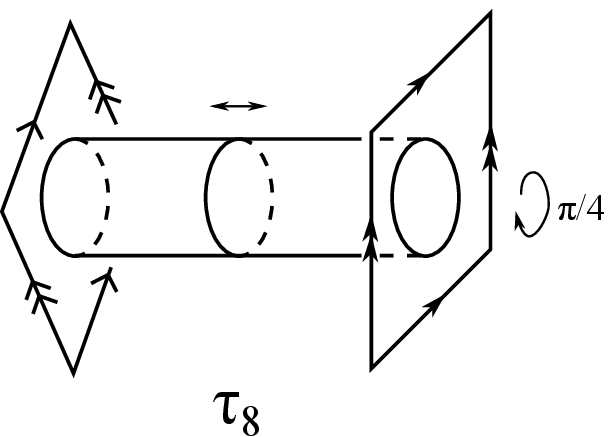}}

Figure 20
\end{center}

Extendibility: Still we have $\tau_8^2=\rho_4$. The same reason used
in $(8_+)$ shows $\tau_8\{\varnothing\}$.

$(9_+)$ Classification: First consider the $\Z_3$ subgroup orbifold
$X$. As we see in  $3_+$, $X=S^2(3,3,3,3)$. The only possible
orbifold $Y=X/\Z_3$ is $S^2(3,3,9)$. But its fundamental group can
not surjectively map onto $\Z_9$. So there is no such an action.

$(10_+)$ Classification: As we see in $(5_+)$, the orbifolds $ X
=\Sigma_2/\langle h^2\rangle=S^2(5,5,5)$. Let
$Y=\Sigma_2/\Z_{10}=X/\Z_2$, then $Y=S^2(2,5,10)$. $\pi_1(Y)=\langle
a,b,c\mid abc=1, a^2=b^5=c^{10}=1\rangle,$ the possible
representations from $\pi_1(Y)$ to $\Z_{10}=\langle t\mid
t^{10}=1\rangle$ are
\begin{equation*} (a,b,c)\mapsto
\begin{cases} (t^5,t^2,t^3)  \\(t^5,t^4,t)\\(t^5,t^6,t^9)\\(t^5,t^8,t^7)
\end{cases}
\end{equation*}
Consider the automorphism of $\Z_{10}:t\mapsto t^3$ and $t\mapsto
t^7$, all these representations are equivalent. Hence this $\Z_{10}$
action is unique up to conjugacy, denote it by $\rho_{10}$,  whose
action on $\Sigma_2$ was indicated in Figure 21. (See \cite{Wa} for
detailed description).

\begin{center}
\scalebox{0.6}{\includegraphics{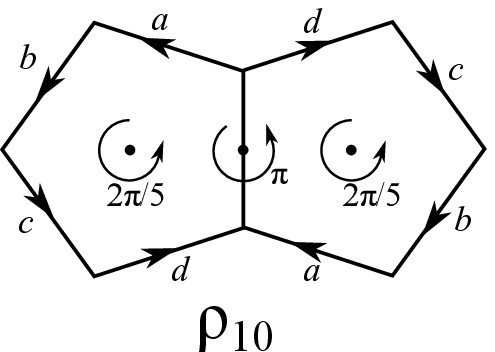}}
\nopagebreak

Figure 21
\end{center}

Extendibility: Since $\rho_{10}^2=\rho_5$, The same reason used in
$(8_+)$ shows  $\rho_{10}\{\varnothing\}$.

$(10_-)$ Classification: Consider the orbifolds $ X
=\Sigma_2/\langle h^2\rangle=S^2(5,5,5)$. By symmetry consideration
, $\bar h$ can not be a antipodal map, since there are exactly three
singular point of index 5, and $\bar h$ also can not be  a
reflection about a circle, since such a circle must passing a
branched points  of index 5, which contradicts to Lemma \ref{fixed
point of h} (2).

$(11_+)$ Classification:  Now $X=\Sigma_2/\Z_{11}$ is a closed
orientable 2-orbifold $\chi(X)=\chi(\Sigma_2)/11=-2/11$, and every
branched point of $X$ must has index $11$. There is no such
orbifold.

$(12_+)$ Classification: First consider the $\Z_6$ subgroup orbifold
$X$. $X$ is either $X_1=S^2(2,2,3,3)$ or $X_2=S^2(3,6,6)$ as wee see
in $(6_+)$. So $Y=X/\Z_2$ is either  $Y_1=(S^2;3,4,4)$, or
$Y_2=(S^2;2,6,6)$, or $Y_3=(S^2;2,2,2,3)$, none of these fundamental
groups has surjection  onto $\Z_{12}$, so there is no
orientation-preserving actions of $Z_{12}$.

$(12_-)$ Classification: Still the $\Z_6$ subgroup orbifold will be
either  $X_1=(S^2;2,2,3,3)$ or $X_2=(S^2;3,6,6)$. The
orientation-reversing $\Z_2$-action on $X$ can not have regular
fixed points, so it must be a antipodal map. So only $X_1$ is
possible and $Y=X_1/\Z_2=(\mathbb{R}P^2;2,3)$. $\pi_1(Y)=\langle
a,b,x\mid ab=x^2, a^2=b^3=1\rangle,$ the possible representations
from $\pi_1(Y)$ to $\Z_{12}=\langle t\mid t^{12}=1\rangle$ are
\begin{equation*} (a,b,x)\mapsto
\begin{cases} (t^6,t^4,t^5)  \\(t^6,t^4,t^{11})\\(t^6,t^8,t^7)\\(t^6,t^8,t)
\end{cases}
\end{equation*}
Consider the automorphism of $\Z_{12}:t\mapsto t^5$ and $t\mapsto
t^7$, all these representations are equivalent. Hence this $\Z_{12}$
action is unique up to equivalent, which is the $\tau_{12}$ in
Example 2.1.

Extendibility: $\tau_{12}$ can not extend orientation reversely:
Otherwise $\tau_{12}$ acts on each component of $S^3\setminus
\Sigma_2$. Denote them by $\Theta_1$ and $\Theta_2$ the two
3-orbifold bounded by $X=(S^2;2,2,3,3)$, then each $\Theta_i$ has
two singular arcs of index 2 and index 3 respectively. We may assume
$|\Theta_1|=B^3$ and the induced orientation reversing involution
$\bar \tau_{12}$ acts on each $\Theta_i$. Hence  $\bar \tau_{12}$ on
$|\Theta_1|$ must be a reflection about an equator disc, which has
regular fixed point, which contradicts to Lemma \ref{fixed point of
h} (1). From Example 2.5, we have $\tau_{12}\{+\}$.\qed

\bigskip\noindent\textbf{Acknowledgement}. The second author is supported by Beijing International Center for Mathematical Research£¬ Peking University.
The third author is partially supported by grant No.10631060 of the
National Natural Science Foundation of China.

\end{document}